\setlist[enumerate]{label={\upshape(\roman*)}}
\tikzstyle{vertex}=[circle, draw, inner sep=0pt, font=\tiny, minimum size=12pt]
\newcommand{\vertex}{\node[vertex]}
\newtheorem{theorem}{Theorem}[section]
\newtheorem{corollary}[theorem]{Corollary}
\newtheorem{lemma}[theorem]{Lemma}
\newtheorem{proposition}[theorem]{Proposition}
\newtheorem{observation}[theorem]{Observation}
\theoremstyle{definition}
\newtheorem{example}[theorem]{Example}
\newtheorem{examples}[theorem]{Examples}
\newtheorem{definition}[theorem]{Definition}
\newtheorem{remark}[theorem]{Remark}
\newtheorem{que}[theorem]{Question}
\newcommand{\Z}{\mathbb{Z}}
\newcommand{\LL}{\mathcal{L}}
\newcommand{\FF}{\mathcal{F}}
\newcommand{\OO}{\mathcal{O}}
\newcommand{\rk}{\mathrm{rk \,}}
\newcommand{\gen}[1]{\langle #1 \rangle}
\newcommand{\dep}{\mathrm{dep}}
\newcommand{\Dep}{\mathrm{Dep}}
\newcommand{\DepCl}{\widehat{\mathrm{Dep}}}
\newcommand{\depl}{\mathrm{depl}}
\newcommand{\Fix}{\mathrm{Fix}}
\newcommand{\Aut}{\mathrm{Aut}}
\numberwithin{equation}{section}
\newcommand{\leqfg}{\leqslant_{\textsf{fg}}}
\newcommand{\leqff}{\leqslant_{\textsf{ff}}}
\begin{document}

\title{Dependence over subgroups of free groups}

\author{Amnon Rosenmann}
\address[A.~Rosenmann]{Institut f\"{u}r Diskrete Mathematik, Technische Universit\"{a}t Graz, Austria}
\email{rosenmann@math.tugraz.at}

\author{Enric Ventura}
\address[E.~Ventura]{Departament de Matem\`{a}tiques, Universitat Polit\`{e}cnica de Catalunya and Institut de Matem\`{a}tiques de la UPC-BarcelonaTech, Catalonia}
\email{enric.ventura@upc.edu}
\thanks{Key words and phrases: dependence in free groups, algebraic extension in free groups, equations over free groups, Stallings foldings, Nielsen transformations}
\date{}

\maketitle

\begin{abstract}
Given a finitely generated subgroup $H$ of a free group $F$, we present an algorithm which computes $g_1,\ldots,g_m\in F$, such that the set of elements $g\in F$, for which there exists a non-trivial $H$-equation having $g$ as a solution, is, precisely, the disjoint union of the double cosets $H\sqcup Hg_1H\sqcup \cdots \sqcup Hg_mH$. Moreover, we present an algorithm which, given a finitely generated subgroup $H\leqslant F$ and an element $g\in F$, computes a finite set of elements of $H*\gen{x}$ that generate (as a normal subgroup) the ``ideal" $I_H(g) \unlhd H*\gen{x}$ of all ``polynomials" $w(x)$, such that $w(g)=1$. The algorithms, as well as the proofs, are based on the graph-theory techniques introduced by Stallings and on the more classical combinatorial techniques of Nielsen transformations. The key notion here is that of dependence of an element $g\in F$ on a subgroup $H$. We also study the corresponding notions of dependence sequence and dependence closure of a subgroup.
\end{abstract}

\section{Introduction}

The following is a basic question studied in classical algebra: given a \emph{field extension} $L / K$ ($L$ over $K$) and an element $\alpha \in L$, is $\alpha$ \emph{algebraic} over $K$, i.e., does there exist a non-trivial polynomial with coefficients in $K$, say $p(x)\in K[x]$, such that $p(\alpha )=0$, or, otherwise, is $\alpha$ \emph{transcendental} over $K$? Moreover, when $\alpha$ is algebraic over $K$, it is interesting to study the set of all polynomials over $K$ annihilating $\alpha$, namely $I_K(\alpha)=\{p(x)\in K[x] \mid p(\alpha)=0\}\subseteq K[x]$. As is well known, $I_K(\alpha)$ is a principal ideal of $K[x]$, $I_K(\alpha)=m_{\alpha}(x)K[x]$, whose unique monic generator $m_{\alpha}(x)$ is called the \emph{minimal polynomial} of $\alpha$ over $K$. An interesting algorithmic problem is then to compute $m_{\alpha}(x)$ in terms of $\alpha$.

Analogous questions can be asked and studied in the context of group theory, where the answers and algorithms turn out to be much more complicated. Here, we start with an \emph{extension of groups} $H\leqslant G$ and an element $g\in G$. The analog of the ``ring of polynomials over $K$" is the free product $H*\langle x\rangle \simeq H*\Z$, and the analog of the ``ideal of polynomials annihilating $\alpha$" is the normal subgroup $I_H(g) \unlhd H*\gen{x}$ of all ``polynomials" $w(x)\in H*\langle x\rangle$, such that $w(g)=1$. In the present paper, after some general considerations, we concentrate on extensions of free groups, $H\leqslant F$, where $H$ is finitely generated, and analyze both the set of elements in $F$ being algebraic over $H$, and the set of equations over $H$ satisfied by a given $g\in F$. We give the precise definitions below.

Let $G$ be a group and $H\leqslant G$. A univariate \emph{equation over $H$} (or an \emph{$H$-equation}, for short) is a ``polynomial equation" of the form
 \begin{equation}\label{eq:equation}
w(x)=1,
 \end{equation}
where $w(x)\in H*\gen{x}$, the free product of $H$ and the free abelian group of rank 1 generated by the \emph{variable} $x$. That is, $w(x)$ is an expression of the form
 \begin{equation}\label{eq:polynomial}
w(x)=h_0 x^{\varepsilon_1}h_1x^{\varepsilon_2}\cdots h_{d-1}x^{\varepsilon_d}h_d,
 \end{equation}
where $\varepsilon_i=\pm 1$ and $h_i\in H$. We assume that the equation is \emph{non-trivial}, i.e., $w(x)\in H*\gen{x}\setminus \{1\}$, and that it is in \emph{reduced form}, i.e., $\varepsilon_{i}=\varepsilon_{i+1}$ whenever $h_i=1$, for $i=1,\ldots ,d-1$. The \emph{degree} of the equation $w(x)=1$ (or of the polynomial $w(x)$), when written in reduced form, is $d=\sum_{i=1}^{d}\vert \varepsilon_i \vert$. The equation is \emph{balanced} if $\sum_{i=1}^{d} \varepsilon_i = 0$. It is standard to group together into higher exponents the various possible consecutive occurrences of $x$ with trivial elements in between; for example, $h_1 x^2 h_2 x^{-2}$ stands for $h_1 x 1 x h_2 x^{-1} 1 x^{-1}$, a balanced equation of degree 4. An element $g \in G$ is a \emph{solution} to equation~\eqref{eq:equation} if $w(g)=1$, that is, the element of $G$ resulting in substituting $g$ for $x$ in the expression~\eqref{eq:polynomial} is the trivial group element.

The area of Group Theory studying equations and their solutions is known as \emph{algebraic geometry over groups}; see~\cite{BMV99}: the elements of $H*\gen{x}$ (or, more generally, of $H*\gen{x_1,\ldots,x_n}$ in the case of multivariate equations) are the analog of non-commutative polynomials. Following the language from~\cite{BMV99}, a normal subgroup $I\unlhd H*\gen{x_1,\ldots,x_n}$ is called an \emph{ideal}, and the set of its common \emph{zeros} (i.e., the subset of $G^n$ of common solutions to the set of equations $\{ w(x)=1 \mid  w(x) \in I\}$) is an \emph{algebraic set}. There is a vast body of literature about the general problem of solving equations (univariate or multivariate) in a group $G$. A typical question is to decide, given an equation (or a system of equations), whether it has a solution in $G$ or not and, in the positive case, to describe the set of all such solutions and its structure. Two of the main results in this direction are Makanin's and Razborov's theorems, which analyze the case of free groups: the first one (see Makanin~\cite{Mak82}) solves the decidability part of the problem, while the second one (see Razborov~\cite{Raz87, Raz94}) provides a kind of compact algorithmic description of \emph{all} solutions, in case they exist. These are two very deep results, with intricate proofs, and having numerous important applications.

Here, we restrict ourselves to univariate equations and to free groups. In this particular case, the description of solution sets is much simpler: Bormotov, Gilman and Myasnikov \cite{BDM09} use reduction techniques from formal language theory to show that the solution set to a single equation of degree greater than 1 is either the whole group or a finite union of sets of the form $\{g_1 g_2^n g_3 \mid n\in \Z \}$. They also give a polynomial time algorithm for computing this solution set.

In contrast, we adopt here a kind of dual point of view. Instead of studying the solution set to a given equation in a group $G$, we fix a subgroup $H\leqslant G$ and are interested in the set of all solutions in $G$ of \emph{all} possible equations over $H$. In addition, given an element $g\in G$, we study the set of equations over $H$ satisfied by $g$, namely the ideal $I_H(g)=\{w(x)\in H*\gen{x} \mid w(g)=1\}\unlhd H*\gen{x}$: Is it trivial or not? And, if not, can we compute a finite set of generators for it (as a normal subgroup)? We shall answer and give algorithms for these and related questions, when the ambient group $G$ is a free group.

The fundamental notion here is that of dependence of an element $g\in G$ on a subgroup $H\leqslant G$, first introduced in Rosenmann~\cite{Ros01}:

\begin{definition}\label{def:dependent}
Let $G$ be a group, $H\leqslant G$ a subgroup, and $g\in G$ an element. We say that $g$ is \emph{dependent on $H$} if there exists a non-trivial $H$-equation $w(x)=1$ that is satisfied by $g$. More generally, $g$ is dependent on a subset $S \subseteq G$ if it is dependent on the subgroup $H=\gen{S}$. We denote by $\dep_G(H)$ the set of elements in $G$ that depend on $H$, and by $\Dep_G(H)=\gen{\dep_G(H)}$, the subgroup they generate, called the \emph{dependence} subgroup of $H$. When there is no risk of confusion, we shall delete the subscript $G$ from the notation.
\end{definition}

\begin{remark}
\label{rem:other_gps}
The notion of dependence, as given above, naturally suits free groups (as defined in \cite{Ros01} and as studied here). However, other possible definitions suit better in other contexts. For example, when $G$ is an abelian group and $1\neq H\leqslant G$ then every $g\in G$ satisfies the equation $hxh^{-1}x^{-1}=1$ (for each $h\in H$) and so, it is dependent on $H$. Here, we may define an element $g$ to be dependent on $H$ if it satisfies an equation of the form $x^nh=1$, $n\in \Z \setminus \{0\}$, $h\in H$. When $G$ is free abelian and $H$ is finitely generated then this definition is equivalent to $\rk(\gen{H,g}) \leqslant \rk(H)$. As seen below (Proposition~\ref{prop:g_depends_free}), the latter is also equivalent to Definition~\ref{def:dependent} in the case of free groups.
\end{remark}

\begin{remark}
Clearly, $H \subseteq \dep_G(H)\subseteq \Dep_G(H) \leqslant G$ but, in general, $\dep_G(H)$ is not a subgroup. Also, when $H\leqslant K\leqslant G$, we have $\dep_K(H)=K\cap \dep_G(H)$ and $\Dep_K(H)\leqslant K\cap \Dep_G(H)$, where the inclusion may be strict. We shall see examples of these situations in the free context; see Examples~\ref{ex:depHfree}.
\end{remark}

\begin{examples}\label{ex:depH}
\begin{enumerate}
\item If $g\in H$ then $g$ is dependent on $H$, satisfying the equation $g^{-1}x=1$ (also $gx^{-1}=1$) of degree 1.
\item If $g \notin H$ but $H^g\cap H\neq 1$ then $g$ is dependent on $H$, satisfying the balanced  equation $x^{-1}hxh'^{-1}=1$ of degree 2, where $h,h'\in H$ are such that $g^{-1}hg=h'$. It follows that if $\{1\}\neq H\unlhd G$ is a nontrivial normal subgroup of $G$ then $\dep(H)=G$. In particular, this is the case when $\{1\}\neq H$ is a subgroup of the center $Z(G)$ of $G$. It follows also that $N_G(H) \subseteq \dep(H)$, where $N_G(H) = \{ g \in G \mid g^{-1}Hg=H \}$ is the normalizer of $H$ in $G$.
\item If $H$ is of finite index in $G$ then $\dep(H)=G$: any $g\in G$ satisfies the equation $x^kh^{-1}=1$, where $k\geqslant 1$ and $h\in H$ are such that $g^k=h$.
\item $\dep(\{1\})$ is, precisely, the set of torsion elements of $G$. This follows from the fact that the only equations of positive degree over the trivial subgroup are those of the form $x^n=1$, $n\in \Z \setminus \{0\}$.
\end{enumerate}
\end{examples}

Clearly, the notion of ``$g$ being dependent on $H$" is the group theory analog of ``$\alpha$ being algebraic over the field $K$". At this point, we can already observe a relevant initial difference with classical field theory: the sum and product of two algebraic elements over $K$ are also algebraic over $K$ while, here, the product of two dependent elements on a subgroup $H$ may very well not be dependent on $H$; see Example~\ref{ex:depHfree}(i). So, one can expect the structure of the set $\dep(H)$ to be more complicated than its analog in the classical situation, where the algebraic elements over $K$ just form an intermediate field between $K$ and $F$. Here, $\dep (H)$ is not even a subgroup but, still, in the free group case, it will not be hard to understand its structure; see Theorem~\ref{thm:depgen} below.

In order to describe the set of elements $\dep(H)$, the following easy observation will be important: $\dep(H)$ is always the disjoint union of several (maybe infinite) $(H,H)$-double cosets:

\begin{observation}\label{obs:cosets}
Let $G$ be a group and let $H\leqslant G$. If $g\in \dep(H)$ then all the elements of the double coset $HgH$ depend on $H$ as well. Moreover, they all satisfy an $H$-equation of the same minimal degree.
\end{observation}

\begin{proof}
Let $w(x)=h_0 x^{\varepsilon_1}h_1x^{\varepsilon_2}\cdots h_{d-1}x^{\varepsilon_d}h_d=1$ be an $H$-equation satisfying $w(g)=1$. Then, for every $h,h'\in H$, we have that the new equation
 $$
w'(x) = h_0 (h^{-1}xh'^{-1})^{\varepsilon_1}h_1 (h^{-1}xh'^{-1})^{\varepsilon_2}\cdots h_{d-1} (h^{-1}xh'^{-1})^{\varepsilon_d}h_d = 1
 $$
satisfies $w'(hgh')=1$, showing that $hgh'\in \dep(H)$. Since (after appropriate cancellations) $w'(x)=1$ has the same or smaller degree than $w(x)=1$, it follows that all the elements of $HgH$ satisfy an $H$-equation of the same minimal degree.
\end{proof}

The following is another elementary result. Its converse, however, is not true in general (see Example~\ref{ex:depHfree}(iii) below).

\begin{observation}\label{obs:root}
Let $H\leqslant G$, $g\in G$, and $k\in \Z \setminus \{0\}$. If $g^k\in G$ is dependent on $H$ then $g$ is dependent on $H$ as well.
\end{observation}

\begin{proof}
If $h_0 x^{\varepsilon_1}h_1x^{\varepsilon_2}\cdots h_{d-1}x^{\varepsilon_d}h_d=1$ is a non-trivial $H$-equation satisfied by $g^k$ then, $h_0 x^{k\varepsilon_1}h_1 x^{k\varepsilon_2} \cdots h_{d-1}x^{k\varepsilon_d}h_d=1$ is a non-trivial $H$-equation satisfied by $g$.
\end{proof}

\medskip

Two related notions are those of a dependence-closed subgroup and the dependence closure operator. The latter appeared first in Rosenmann~\cite{Ros01} and it coincides with the notion of elementary-algebraic extension closure from Miasnikov--Ventura--Weil~\cite{MVW07}.

\begin{definition}
Let $H\leqslant G$. The subgroup $H$ is called \emph{dependence-closed} if $\Dep(H)=H$, i.e., if the only elements $g\in G$ which are solutions to non-trivial $H$-equations are those $g\in H$. For example, any free factor of $G$ is clearly dependence-closed.
\end{definition}

When $H$ is not dependence-closed then $\dep(H)$, the set of \emph{all} dependent elements
on $H$, strictly contains $H$. But then, when constructing the subgroup $\Dep(H)$ generated by these elements, new elements may emerge, which depend on $\Dep(H)$ but not on $H$. We can then iterate this process of executing the dependence operator and ask ourselves whether the process finally stabilizes, that is, do we reach a dependence-closed subgroup after finally many steps, or can the process continue indefinitely? In Proposition~\ref{prop:depl}, we show that when $H$ is a finitely generated subgroup of a free group then the process indeed stabilizes after finitely many steps. However, the number of steps may be arbitrary large even in the case of a subgroup of a free group of rank $2$, as shown in Example~\ref{ex:dep_length}, but it is still bounded by the total length of the generators of $H$ (see Proposition~\ref{prop:depl}).

\begin{definition}
Let the ascending sequence of subgroups $H_0\leqslant H_1\leqslant H_2\leqslant \cdots \leqslant G$ be defined by $H_0=H$ and $H_i=\Dep(H_{i-1})=\Dep^i(H)$, for $i\geqslant 1$. We define the \emph{dependence closure} of $H$ to be the subgroup
 $$
\DepCl(H)=\bigcup_{i\geqslant 0} H_i \leqslant G.
 $$
If this ascending sequence stabilizes after finitely many steps, we define the \emph{dependence length} of $H$ to be $\depl(H)=\min \{j\geqslant 0 \mid H_j=\DepCl(H)\}$; otherwise, we let $\depl(H)=\infty$.
\end{definition}

\begin{observation}
For any subgroup $H\leqslant G$, $\DepCl(H)$ is the smallest dependence-closed subgroup of $G$ containing $H$.
\end{observation}

\begin{proof}
Let $g\in G$ be dependent on $\DepCl(H)$, i.e., $g$ is a solution to some non-trivial $\DepCl(H)$-equation $w(x)=h_0x^{\varepsilon_1}h_1\cdots x^{\varepsilon_d}h_d=1$. Since $h_0,h_1,\ldots ,h_d\in \DepCl(H)$, there exists $m\geqslant 0$, such that $h_0,h_1,\ldots ,h_d\in \Dep^m(H)$. Thus, $w(x)=1$ is also a non-trivial $\Dep^m(H)$-equation and hence, $g\in \Dep^{m+1}(H)\leqslant \DepCl(H)$. Therefore, $\DepCl(H)$ is dependence-closed. The rest of the statement is clear.
\end{proof}

The following two propositions are straightforward, and we leave their proofs to the reader.

\begin{proposition}
Let $H\leqslant K\leqslant G$. If $H$ is dependence-closed in $K$, and $K$ is dependence-closed in $G$ then $H$ is dependence-closed in $G$. \hfill $\qed$
\end{proposition}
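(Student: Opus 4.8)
The plan is to unwind the definition of ``dependence-closed'' and reduce everything to a single monotonicity observation about the operator $\dep{\cdot}$. Recall that ``$H_G$ is dependence-closed'' means precisely that $(\dep{H}) \cap G \subseteq H$, and similarly ``$G_K$ is dependence-closed'' means $(\dep{G}) \cap K \subseteq G$; the goal ``$H_K$ is dependence-closed'' is the statement $(\dep{H}) \cap K \subseteq H$. So I would take an arbitrary $g \in K$ with $g \in \dep{H}$ and chase it into $H$.

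First I would record the elementary fact that dependence is monotone in the subgroup: if $H < G$ then $\dep{H} \subseteq \dep{G}$. This is immediate from condition~(3) of Definition~\ref{def:dependence} --- if $g$ depends on $H$, take a finitely-generated $H' < H$ with $\rk \gen{H',g} \leq \rk H'$; since $H' < H < G$ this same $H'$ is a finitely-generated subgroup of $G$ witnessing $g \in \dep{G}$. (One could equally argue via condition~(1): a univariate equation over $H$ is a fortiori a univariate equation over $G$.)

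With this in hand the argument is a two-step chase. Given $g \in K$ with $g \in \dep{H}$: by monotonicity $g \in \dep{G}$, so $g \in (\dep{G}) \cap K \subseteq G$ because $G_K$ is dependence-closed; thus $g \in G$. Now $g \in (\dep{H}) \cap G \subseteq H$ because $H_G$ is dependence-closed; thus $g \in H$. This shows $(\dep{H}) \cap K \subseteq H$, i.e. no element of $K \setminus H$ depends on $H$, which is exactly the assertion that $H_K$ is dependence-closed. There is no real obstacle here: the only non-formal ingredient is the monotonicity of $\dep{\cdot}$, and that is a one-line consequence of the definition; everything else is bookkeeping with the set inclusions.
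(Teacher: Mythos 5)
Your proof is correct, and since the paper explicitly leaves this proposition to the reader, your argument is exactly the intended "straightforward" one: the monotonicity $\dep{H}\subseteq\dep{G}$ for $H<G$ (immediate from condition (3) of Definition~\ref{def:dependence}) plus the two-step chase $g\in\dep{H}\cap K\Rightarrow g\in\dep{G}\cap K\subseteq G\Rightarrow g\in\dep{H}\cap G\subseteq H$. Nothing is missing.
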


\begin{proposition}
Let $H\leqslant G$ be a dependence-closed subgroup of a group $G$. Then:
\begin{enumerate}
\item $H$ is malnormal in $G$: $H^g \cap H=1$ for every $g\in G\setminus H$;
\item $H$ is pure in $G$ (also called root-closed, radical-closed): if $g^n \in H$, $n\neq 0$, then $g\in H$;
\item if $H\leqslant K\leqslant G$ and $H \ne K$ then $|K:H|=\infty$;
\item if $K$ is a free factor of $H$  then $K$ is dependence-closed in $G$;
\item if $K\leqslant G$ is also dependence-closed in $G$ then so is $H\cap K$. \hfill $\qed$
\end{enumerate}
\end{proposition}

\medskip

In the following and along the rest of the paper, when referring to morphisms, we adopt the notation with arguments on the left: $x\mapsto (x)\varphi=x\varphi$, and compositions denoted accordingly: $x\mapsto x\varphi\mapsto x\varphi\phi$.

Let us now look at $H$-equations from a different perspective. When $H\leqslant G$ and $g\in G$, we can consider the group homomorphism
 \begin{eqnarray}\label{eq:phi}
\varphi_{H,g} \colon H*\gen{x} & \to & G, \nonumber \\ h & \mapsto & h, \; \forall h\in H, \\ x & \mapsto & g, \nonumber
 \end{eqnarray}
which is well defined by the universal property of free products. Then,
 $$
(w(x))\varphi_{H,g} =w(g).
 $$
So, in this setting, $g$ is a solution to $w(x)=1$ if and only if $w(x)\in \ker \varphi_{H,g}$. Thus, the set of $H$-equations satisfied by $g$ is exactly
 $$
\{w(x)=1 \mid w(x) \in \ker \varphi_{H,g} \},
 $$
and $\ker \varphi_{H,g}$ is a normal subgroup of $H*\gen{x}$. Hence, a natural way to describe these $H$-equations is by giving a set of generators for the ideal $I_H(g)=\ker \varphi_{H,g}$ as a normal subgroup of $H*\gen{x}$. This is what we do in the free case: we show that $I_H(g)$ is always finitely generated (as normal subgroup) and we provide an algorithm computing a set of generators for it; see Theorem~\ref{thm:eqbasis}.

\subsection{Organization of the paper}

After this section, where the basic notions and definitions corresponding to dependence on subgroups in general groups have been introduced, we restrict ourselves to free groups $F$ for the rest of the paper. In Section~\ref{sec:Dep_free_groups} we exhibit basic results for free groups. In Section~\ref{sec:Stallings} we present a brief description of the well-known Stallings techniques for studying subgroups of a free group, which play a central role in the paper. Then, in Section~\ref{sec:all_dependent_elements}, we examine closely the way we can use Stallings foldings in order to provide an explicit description of the set of elements that depend on a finitely generated subgroup $H\leqslant F$. This set is proved to have the form of a finite disjoint union of double cosets $\dep(H)= Hg_0H\sqcup Hg_1H\sqcup \cdots \sqcup Hg_nH$, where $g_0=1$ and the $g_i$'s are computable and generate $\Dep(H)$; see Theorem~\ref{thm:depgen}. In Section~\ref{sec:equations} we present two alternative algorithms for computing a finite set of normal generators for the ideal (i.e., normal subgroup) $I_H(g) \unlhd H*\gen{x}$ consisting of those polynomials $w(x)$ over $H$ satisfying $w(g)=1$, for a given fixed element $g\in F$; see Theorem~\ref{thm:eqbasis}. The first algorithm is based on Nielsen transformations and the second one on Stallings graph-theory techniques providing, respectively, an algebraic and a geometric reason (in a way, complementary to one another) for such equations to exist. Finally, in Section~\ref{sec:dep_closure} we study further the notion of a dependence-closed subgroup, and the corresponding closure operator within the context of free groups $F$, including an example of a subgroup $H\leqslant F_2$ of an arbitrarily long dependence length. We close the paper highlighting a few interesting questions for further research.

\bigskip

A preliminary version of this paper was uploaded to arXiv on July 2021; see~\cite{RV21}. A year later, in the process of writing the present final version, we have been informed of a recent further development in this interesting topic: by applying as well Stallings techniques, Dario Ascari~\cite{As22} has gone further and has analyzed the set of degrees of the polynomials in the ideal $I_H(g)$. Answering one of our questions from~\cite{RV21}, he presented an algorithm that computes a non-trivial polynomial in $I_H(g)$ of a minimal degree. Also, he exhibited an interesting example of a finitely generated subgroup $H\leqslant F$, whose corresponding ideal $I_H(g)$ \emph{cannot} be generated by its polynomials of minimal degree, even as a normal subgroup (providing another intriguing difference with the more classical field theory).

\section{Dependence in free groups}\label{sec:Dep_free_groups}

For the rest of the paper we concentrate on free groups and study the above notions in this setting, both from the algebraic and the algorithmic points of view. We adopt the following notation: a free group is generally denoted by $F$; also by $F_r$, if we want to emphasize its rank, $\rk(F_r)=r$; and by $F(A)$, if we want to emphasize a basis (a set of free generators) $A\subseteq F$ for it. The notations $H\leqfg F$ and $H\leqff F$ mean that $H$ is a finitely generated, respectively, a free factor of $F$.

The next proposition follows from works by Nielsen and Schreier (see Rosenmann~\cite[Lemma 1.1]{Ros01}, or Miasnikov--Ventura--Weil~\cite[Prop. 3.13]{MVW07}) and provides equivalent definitions for dependence of elements in the setting of free groups.

\begin{proposition}\label{prop:g_depends_free}
Let $H\leqslant F$ and $g\in F$. The following statements are equivalent:
\begin{enumerate}
\item $g$ does not depend on $H$;
\item the morphism $\varphi_{H,g}\colon H*\gen{x}\to F, h \mapsto h, x\mapsto g$ is injective;
\item $H$ is a proper free factor of $\gen{H,g}$;
\item $H$ is contained in a proper free factor of $\gen{H,g}$;
\item $\rk(\gen{H',g})=\rk(H')+1$ for every finitely generated subgroup $H' \leqslant H$;
\item $\rk(\gen{H',g})>\rk(H')$ for every finitely generated subgroup $H' \leqslant H$. \qed
\end{enumerate}
\end{proposition}

In view of Proposition~\ref{prop:g_depends_free}, it is quite easy to decide, given $H\leqfg F$ and $g\in F$, whether $g$ is dependent on $H$ or not: we only need to compute the rank of $\gen{H,g}$. In case $g$ is dependent on $H$, we can even find an explicit non-trivial $H$-equation $w(x)=1$, such that $w(g)=1$, by a brute force algorithm, enumerating all possible $H$-equations and checking for which one is $g$ a solution. In the following sections we tackle more challenging problems:
\begin{enumerate}
\item in Section~\ref{sec:all_dependent_elements} we provide an effective algorithm which, given $H\leqfg F$, computes a finite set of generators for $\Dep(H)$;
\item in Section~\ref{sec:equations} we present two independent algorithms (both faster and more interesting than just the brute force one) which, given $g\in \dep(H)$, results in a representation of \emph{all} the $H$-equations having $g$ as a solution.
\end{enumerate}

\begin{examples}\label{ex:depHfree}
Let $F_2=F(\{a,b\})$ be a free group of rank 2.
\begin{enumerate}
\item For $H=\gen{a^2, b^2}$, we have $a,b\in \dep(H)$ (being solutions to the $H$-equations $x^2a^{-2}=1$ and $x^2b^{-2}=1$, respectively) and so, $\Dep(H)=F_2$. However, $ab\notin \dep(H)$ since $\{a^2, b^2, ab\}$ is a freely independent subset of $F_2$. It follows that $\dep(H)$ is not a subgroup and thus, it is strictly contained in $\Dep(H)$.
\item When $H=\gen{a^2, b^2}$ and $K=\gen{a^2, b^2, ab}\leqslant F_2$ then $H\leqff K\leqslant F_2$. It follows that $\Dep_K(H)=H$ and $K\cap \Dep_{F_2}(H)=K\cap F_2 =K$, so that the inclusion $\Dep_K(H) \leqslant K\cap \Dep_{F_2}(H)$ is strict.
\item For $H=\gen{b, aba^{-1}}\leqslant F_2$, we have $a\in \dep(H)$, whereas $a^r\not\in \dep(H)$ for $|r|\geqslant 2$.
\item Let $F$ be a free group and $g\in F$ not being a proper power. Then, for every $0\neq r\in \Z$, $\dep(\gen{g^r})=\Dep(\gen{g^r})=\gen{g}$. In particular, $\gen{g^r}$ is dependence-closed if and only if $r=\pm 1$, i.e., the dependence-closed cyclic subgroups are, precisely, the maximal ones.
\item Let $F$ be a free group, $g\in F$, and $H,K\leqslant F$, such that $F=H*\gen{g}$ and $H<K$ (strict inclusion). Then, $\Dep(K)=F$ because $g$ is dependent on $K$.
\end{enumerate}
\end{examples}

When $H\leqfg F$ then $\Dep(H)\leqfg F$ and, moreover, $\rk(\Dep(H))\leqslant \rk(H)$. This follows from results in Rosenmann~\cite{Ros01} and Miasnikov--Ventura--Weil~\cite{MVW07}; for completeness, we offer here a direct and elementary proof.

\begin{lemma}\label{lem:sequence}
Let $H_0 \leqfg H_1\leqfg H_2 \leqfg \cdots$ be a non-decreasing sequence of finitely generated subgroups of a free group $F$, satisfying $\rk (H_i)\geqslant \rk (H_{i+1})$, for $i\geqslant 0$. Let $H=\bigcup_{i\geqslant 0} H_i$. Then $\rk (H)=\lim_{i\to \infty} \rk (H_i)$ and, furthermore, there exists $m$, such that $H=H_i$ for all $i\geqslant m$.
\end{lemma}

\begin{proof}
Let $r_i =\rk(H_i)$, for each $i\geqslant 0$, and let $r_H=\rk(H)$. The sequence $(r_i)_{i\geqslant 0}$ of natural numbers is monotone decreasing and therefore eventually constant: there exists $r\geqslant 0$ and $m'\geqslant 0$ such that $r_i=r$, for all $i\geqslant m'$. Suppose now that $r_H >r$. It follows that there exists $H'\leqff H$ of rank $r+1$. But then, $H'\leqslant H_{m''}$, for some $m''$, and, in fact, $H'\leqff H_i$, for all $i\geqslant m''$, which is in contradiction to $\rk(H_i)=r$ for all $i\geqslant m'$. Therefore, $r_H\leqslant r$ and there exists $m$, such that $H_m$ contains a generating set of $H$, which implies that $H=H_i$, for all $i\geqslant m$, and $r_H=r$.
\end{proof}

\begin{proposition}\label{prop:rankDep}
Let $H\leqfg F$. Then, $\rk(\Dep(H))\leqslant \rk(H)$.
\end{proposition}

\begin{proof}
Being finitely generated, $H$ is contained in a finitely generated free factor of $F$ (even in the case that $F$ is not countably generated). Therefore, there are only countably many elements in $F$ depending on $H$, say $(g_i)_{i \geqslant 1}$. Let $(H_i)_{i \geqslant 0}$ be the sequence of ascending subgroups of $F$ defined by $H_0 =H$ and $H_{i+1}=\gen{H_i, g_{i+1}}$, for $i>0$. Since $g_{i+1}\in \dep(H)$ then $g_{i+1}\in \dep(H_i)$ and hence, by Proposition~\ref{prop:g_depends_free}(vi), $\rk (H_{i+1}) \leqslant \rk (H_i)$, for each $i$. The result then follows by Lemma~\ref{lem:sequence}.
\end{proof}

\medskip

\begin{proposition}\label{prop:properties}
Let $H,K\leqslant G$. Then,
\begin{enumerate}
\item $\Dep(H\cap K)\leqslant \Dep(H)\cap \Dep(K)$ and the inclusion may be strict;
\item $\Dep(\gen{H, K})\geqslant \gen{\Dep(H), \Dep(K)}$ and the inclusion may be strict.
\end{enumerate}
\end{proposition}

\begin{proof}
(i). The inclusion is immediate by definition. As an example for a strict inclusion, let $F=F(\{a,b,c\})$, $H=\gen{a^{-1}ba,b}$ and $K=\gen{a^{-1}ca,c}$. Then $H\cap K=1$ and so, $\Dep(H\cap K)=1$. However, $\Dep(H)=\gen{a,b}$, $\Dep(K)=\gen{a,c}$, hence $\Dep(H)\cap \Dep(K)=\gen{a}$.

(ii) The inclusion is clear. As an example for a strict inclusion, let $F=F(\{a,b\})$, $H=\gen{a^2b}$ and $K=\gen{a^4b}$. By Example~\ref{ex:depHfree}(iv), $\Dep(H)=H$ and $\Dep(K)=K$. But $\gen{H,K}=\gen{a^2, b}$ and so, $\Dep(\gen{H,K})=F$, which strictly contains $\gen{a^2, b}=\gen{\Dep(H), \Dep(K)}$.
\end{proof}

\section{Stallings graph-theory techniques}\label{sec:Stallings}

Let $A=\{a_1, \ldots ,a_r\}$ be an alphabet of $r$ letters, let $A^{\pm}=\{a_1, \ldots ,a_r, a_1^{-1}, \ldots ,a_r^{-1}\}$ be its formal involutive closure, and let $F(A)$ be the free group on $A$ (formally, $F(A)$ is the free monoid $(A^{\pm})^*$ on $A^{\pm}$, modulo the equivalence relation generated by the elementary reductions $a_ia_i^{-1}\sim a_i^{-1}a_i \sim 1$). In 1983, elaborating on previous ideas by several authors, Stallings~\cite{Sta83} established the notion of so-called \emph{Stallings $A$-automata}: oriented graphs (allowing loops and parallel edges) with labels from $A^{\pm}$ at the edges, being \emph{involutive} (i.e., for every edge $e$ from $p$ to $q$ with label $a\in A$ there is an edge $e^{-1}$ from $q$ to $p$ labelled $a^{-1}$; $e$ and $e^{-1}$ are said to be \emph{inverse} to each other), with a selected vertex called the \emph{basepoint} (denoted here $\bp$), and being connected, \emph{deterministic} (there are no two different edges with the same label coming from, or into, the same vertex) and \emph{trim} (every vertex appears in some reduced closed path through the basepoint). A (nontrivial) \emph{path} of \emph{length} $n\geqslant 1$ is a concatenation of edges $\gamma=e_1^{\varepsilon_1}\cdots e_n^{\varepsilon_n}$, such that $\tau e_i^{\varepsilon_i}=\iota e_{i+1}^{\varepsilon_{i+1}}$ (the terminal vertex of an edge is the initial vertex of the following edge), for $i=1,\ldots ,n-1$. The label of $\gamma$ is $\ell(\gamma)=(\ell(e_1))^{\varepsilon_1} \cdots (\ell(e_n))^{\varepsilon_n}$, the product of the labels of the edges, understood as an element of $F(A)$. The path is \emph{reduced} if it has no backtracking, i.e., if $\varepsilon_i= \varepsilon_{i+1}$ whenever $e_i=e_{i+1}$. Note, that in the deterministic case, a path $\gamma$ is reduced if and only if its label $\ell(\gamma)$ is a reduced word of $(A^{\pm})^*$. A path beginning and ending at vertex $p$ is called a \emph{closed path at $p$} (a \emph{$p$-path}, for short). For later convenience, given a vertex $p$ in a deterministic $A$-automaton $\Gamma$ and given a reduced word $u\in F(A)$, we define $pu$ to be the end of the unique possible path in $\Gamma$ labelled $u$ and starting at $p$; in case no such path exists, $pu$ remains undefined.

For the sake of brevity, we may only mention and depict the positive subautomaton of $\Gamma$ (i.e., that formed by the edges labelled by letters in $A$), denoted $\Gamma^+$, bearing in mind that, for each of such edges, $\Gamma$ also contains its inverse (even if not depicted or mentioned). The \emph{degree} of a vertex in $\Gamma$ is the number of edges of $\Gamma^+$ incident to it.

With this notion, Stallings~\cite{Sta83} established a bijection between the set of (free) subgroups of $F(A)$, and the set of isomorphism classes of Stallings $A$-automata.

\begin{theorem}[Stallings, \cite{Sta83}]\label{thm:Stallings}
The following is a bijection:
 $$
\begin{array}{rcl}
St \colon \{H\leqslant F(A)\} & \longrightarrow & \{\mbox{isom. classes of Stallings $A$-automata}\} \\ H & \mapsto & \Gamma_{A}(H), \\ \LL(\Gamma) & \mapsfrom & \Gamma.
\end{array}
 $$
Furthermore, $H\leqfg F_A$ if and only if $\Gamma_A(H)$ is finite; in this case, both directions are computable. \qed
\end{theorem}

The map to the left consists of reading the \emph{language} subgroup $\LL(\Gamma)\leqslant F(A)$ of a given Stallings $A$-automaton $\Gamma$, i.e., the set of labels of reduced $\bp$-cycles (cycles that contain the basepoint) in $\Gamma$. As for the map to the right, it assigns to each subgroup $H\leqslant F(A)$ its \emph{Stallings} $A$-automaton $\Gamma_A(H)$, being the core of the \emph{Schreier coset graph} of $H$ with respect to the basis $A$ of $F(A)$, and with the trivial coset taken as the basepoint, $\bp=H1$. By a \emph{core} of an $A$-automaton $\Gamma$, denoted $c(\Gamma)$, we mean its largest trim subautomaton. It follows that $\Gamma$ is trim if and only if $c(\Gamma)=\Gamma$.
One can think of the Schreier coset graph of $H\leqslant F(A)$ with respect to $A$ as the covering graph of the $A$-bouquet $\FF(A)$ corresponding to the subgroup $H$; and its \emph{Stallings graph} $\Gamma_A(H)$ as its core, i.e., the result of trimming all the hanging trees not containing the basepoint. For more details, see~\cite{DV21, KM02, MVW07, Sta83}.

In this paper, the important part of the bijection in Theorem~\ref{thm:Stallings} is when restricted to finitely generated subgroups on one side, and to finite Stallings $A$-automata on the other. In this case, it becomes algorithmic friendly, i.e., there are fast algorithms for computing both directions, which we summarize in the following sentences. Let us also mention that, in the finite and connected case, it is easy to see that an $A$-automaton $\Gamma$ is trim if and only if no vertex in $\Gamma$ has degree 1, except, possibly, for $\bp$; moreover, in this case, one can obtain $c(\Gamma)$ from $\Gamma$ by applying finitely many times the trim operation: removing a vertex of degree one different from $\bp$ (together with the incident edge).

In order to compute the map to the left, assuming that $\Gamma$ is finite, we first construct a spanning tree $T$ of $\Gamma$ by removing a set of edges $E'=\{e\in E\Gamma^+\setminus ET\}$. Then, for each $e\in E'$, consider the group element $h_e=\ell(T[\bp, \iota e]\cdot e\cdot T[\tau e, \bp])\in F(A)$, where $T[p,q]$ stands for the unique reduced path in $T$ from vertex $p$ to vertex $q$ (the \emph{$T$-geodesic} from $p$ to $q$). It is easy to see that, when $\Gamma$ is deterministic, this is a free basis for $\LL(\Gamma)\leqslant F(A)$, whose rank then coincides with the first Betti number (cyclomatic number) of $\Gamma$, namely $\rk(\LL(\Gamma))= 1-|V\Gamma|+ |E\Gamma|$ (in general, without determinism, we can only say that $\{h_e \mid e\in E'\}$ generates $\LL(\Gamma)$ and so, $\rk(\LL(\Gamma))\leqslant 1-|V\Gamma|+|E\Gamma|$).

In order to compute the map to the right, we need to use a key result in this setting: \emph{``for every $H\leqslant F(A)$ there exists a unique $A$-automaton (up to isomorphism) which is deterministic, trim, and has language $H$"}; it is called the \emph{Stallings $A$-automaton} for $H$, and is denoted $\Gamma_A(H)$. Given a set $W=\{h_1,\ldots ,h_n\}$ of reduced words on $A$, we can construct the Stallings $A$-automaton $\Gamma_A(H)$ for the subgroup $H=\gen{h_1, \ldots ,h_n}\leqslant F(A)$ by applying the following procedure. First, for each $h_i$, $i=1,\ldots ,n$, we form the circular graph (called \emph{petal}) whose $A$-label, when starting at the basepoint, spells the word $h_i$ (or its inverse if traveled in the opposite direction). Then, we form the \emph{wedge} of these circuits by identifying the $n$ basepoints into a single one, denoted $\bp$. The resulting $A$-automaton is called the \emph{flower automaton}, denoted $\FF(W)$. Note, that $\LL(\FF(W))=H$. In order to gain determinism, we apply a finite series of \emph{elementary foldings}: whenever two edges sharing the same initial (resp., terminal) vertex have the same label, we identify them into a single edge, together with their terminal (resp., initial) vertices. When the two terminal as well as the two initial vertices are the same, the elementary folding is called \emph{closed} (or \emph{non-homotopic}) (see Figure~\ref{fig:foldings}(c),(d)) and the first Betti number of the automaton reduces by one; otherwise, the folding is called \emph{open} (or \emph{homotopic}) (see Figure~\ref{fig:foldings}(a),(b)), and the first Betti number remains unchanged. Note, that in a closed folding we identify two edges, whereas in an open folding we identify two edges as well as two vertices. In both cases, the language of the automaton remains unchanged. Of course, after performing an elementary folding, new possible foldings may emerge, but since at each step the number of edges strictly decreases, the process terminates after a finite number of steps. By construction, the resulting $A$-automaton is deterministic and has language $H$. It is also easy to see that it is trim (each vertex of the flower automaton, except possibly for the basepoint, has two incident edges of different labels, and they remain incident to the vertex throughout all the folding process). By the previous key result, the output of this process must be $\Gamma_A(H)$. In particular, this implies that the result of the folding process is independent of the specific sequence of foldings applied, and even of the set of generators for $H$ we started with: it only depends on the subgroup $H\leqslant F(A)$ (and on the ambient basis $A$ chosen to work with).

Touikan~\cite{Tou06} gave a more refined algorithm to compute $\Gamma_A(H)$ in time $\OO(n\log^*(n))$, where $n$ is the sum of the lengths of the given generators for $H$, and $\log^*(n)$ is the minimal number of successive applications of the log operator to $n$ until reaching 1 or below.

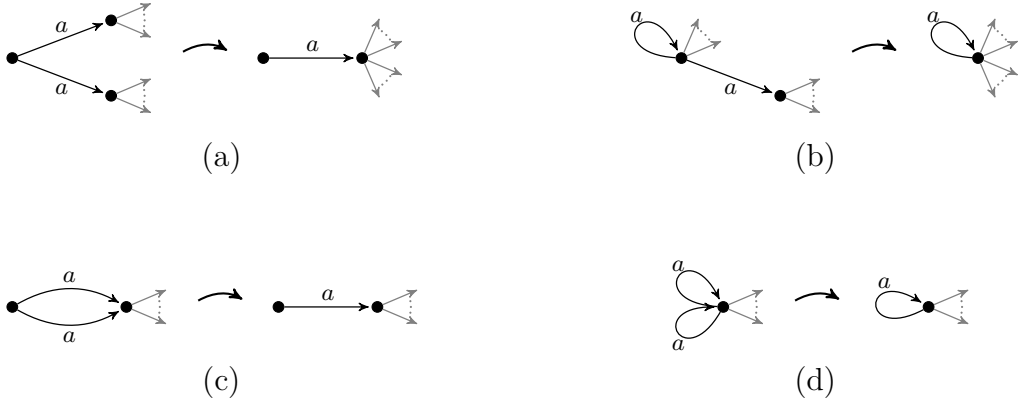
\begin{figure}
\centering
\begin{tikzpicture}[shorten >=1pt, node distance=1cm and 1.5cm, on grid,>=stealth',scale=1.1]
\tikzset{every loop/.style={min distance=10mm,in=0,out=60,looseness=5}}
\begin{scope}
	\node[state] (1) {};
	\node[state] (2) [above right = 0.5 and 1.3 of 1] {};
   		\node[] (21) [above right = 0.3 and 0.7 of 2] {};
   		\node[] (22) [below right = 0.3 and 0.7 of 2] {};
   \node[state] (3) [below right = 0.5 and 1.3 of 1] {};
   		\node[] (31) [above right = 0.3 and 0.7 of 3] {};
   		\node[] (32) [below right = 0.3 and 0.7 of 3] {};
   \path[->]
        (1) edge[] node[pos=0.5,above] {$a$} (2)
            edge[] node[pos=0.5,below] {$a$} (3)
        (2) edge[gray] (21)
            edge[gray] (22)
        (3) edge[gray] (31)
            edge[gray] (32);
    \node[] (i) [right = 2.1 of 1]{};
    \node[] (t) [right = 0.9 of i] {};
    \path[->]
        (i) edge[thick, bend left] (t);
	\node[state] (1') [right= 0.3 of t] {};
	\node[state] (2') [right =  1.3 of 1'] {};
		\node[] (21') [above right = 0.7 and 0.3 of 2'] {};
   		\node[] (22') [above right = 0.3 and 0.7 of 2'] {};
   		\node[] (31') [below right = 0.3 and 0.7 of 2'] {};
   		\node[] (32') [below right = 0.7 and 0.3 of 2'] {};
   \path[->]
        (1') edge node[pos=0.5,above] {$a$} (2')
        (2') edge[gray] (21')
             edge[gray] (22')
             edge[gray] (31')
             edge[gray] (32');
	\foreach \n [count=\count from 1] in {1,2,3}{
       \node[dot,gray] (2d\n) at ($(2)+(-20+\count*10:0.4cm)$) {};}
	\foreach \n [count=\count from 1] in {1,2,3}{
       \node[dot,gray] (2d\n) at ($(3)+(-20+\count*10:0.4cm)$) {};}
	\foreach \n [count=\count from 1] in {1,2,3}{
       \node[dot,gray] (2d\n) at ($(2')+(24+\count*10:0.4cm)$) {};}
	\foreach \n [count=\count from 1] in {1,2,3}{
       \node[dot,gray] (2d\n) at ($(2')+(-63+\count*10:0.4cm)$) {};}
	\draw (2.5,-1.2) node {(a)};
\end{scope}
\begin{scope}[xshift=8cm]
	\node[state] (1) {};
		\node[] (11) [above right = 0.7 and 0.3 of 1] {};
		\node[] (12) [above right = 0.3 and 0.7 of 1] {};
	\node[state] (3) [below right = 0.5 and 1.3 of 1] {};
		\node[] (31) [above right = 0.3 and 0.7 of 3] {};
		\node[] (32) [below right = 0.3 and 0.7 of 3] {};
	\path[->]
		(1) edge[loop,rotate=120] node[pos=0.5,above] {$a$} (1)
			edge[] node[pos=0.5,below] {$a$} (3)
			edge[gray] (11)
			edge[gray] (12)
		(3) edge[gray] (31)
			edge[gray] (32);
	\node[] (i) [right = 2.1 of 1]{};
	\node[] (t) [right = 0.9 of i] {};
	\path[->]
		(i) edge[thick, bend left] (t);
	\node[state] (1') [right= 0.9 of t] {};
	\node[] (21') [above right = 0.7 and 0.3 of 1'] {};
	\node[] (22') [above right = 0.3 and 0.7 of 1'] {};
	\node[] (31') [below right = 0.3 and 0.7 of 1'] {};
	\node[] (32') [below right = 0.7 and 0.3 of 1'] {};
	\path[->]
		(1') edge[loop,rotate=120] node[pos=0.5,above] {$a$} (1')
			 edge[gray] (21')
			 edge[gray] (22')
			 edge[gray] (31')
			 edge[gray] (32');
	\foreach \n [count=\count from 1] in {1,2,3}{
		\node[dot,gray] (2d\n) at ($(1)+(24+\count*10:0.4cm)$) {};}
	\foreach \n [count=\count from 1] in {1,2,3}{
		\node[dot,gray] (2d\n) at ($(3)+(-20+\count*10:0.4cm)$) {};}
	\foreach \n [count=\count from 1] in {1,2,3}{
		\node[dot,gray] (2d\n) at ($(1')+(24+\count*10:0.4cm)$) {};}
	\foreach \n [count=\count from 1] in {1,2,3}{
		\node[dot,gray] (2d\n) at ($(1')+(-63+\count*10:0.4cm)$) {};}
	\draw (1.6,-1.2) node {(b)};
\end{scope}
\begin{scope}[yshift=-3cm]
	\node[state] (1) {};
	\node[state] (2) [right = of 1] {};
  		\node[] (21) [above right = 0.3 and 0.7 of 2] {};
  		\node[] (22) [below right = 0.3 and 0.7 of 2] {};
	\path[->]
        (1) edge[bend left] node[pos=0.5,above] {$a$} (2)
			edge[bend right] node[pos=0.5,below] {$a$} (2)
        (2) edge[gray] (21)
            edge[gray] (22);
    \node[] (i) [right = 2.3 of 1] {};
    \node[] (t) [right = 0.9 of i] {};
    \path[->]
        (i) edge[thick, bend left] (t);
  \node[state] (1') [right= 0.3 of t] {};
  \node[state] (2') [right =  1.3 of 1'] {};
  	\node[] (21') [above right = 0.3 and 0.7 of 2'] {};
  	\node[] (22') [below right = 0.3 and 0.7 of 2'] {};
  \path[->]
        (1') edge node[pos=0.5,above] {$a$} (2')
        (2') edge[gray] (21')
             edge[gray] (22');
	\foreach \n [count=\count from 1] in {1,2,3}{
      \node[dot,gray] (2d\n) at ($(2)+(-20+\count*10:0.4cm)$) {};}
	\foreach \n [count=\count from 1] in {1,2,3}{
      \node[dot,gray] (2d\n) at ($(2')+(-20+\count*10:0.4cm)$) {};}
	\draw (2.5,-0.9) node {(c)};
\end{scope}
\begin{scope}[yshift=-3cm, xshift=8.5cm]
	\node[state] (1) {};
		\node[] (11) [above right = 0.3 and 0.7 of 1] {};
		\node[] (12) [below right = 0.3 and 0.7 of 1] {};
	\path[->]
		(1) edge[loop,rotate=120] node[pos=0.5,above] {$a$} (1)
			edge[loop,rotate=180] node[pos=0.5,below] {$a$} (1)
			edge[gray] (11)
			edge[gray] (12);
	\node[] (i) [right = 0.8 of 1]{};
	\node[] (t) [right = 0.9 of i] {};
	\path[->]
		(i) edge[thick, bend left] (t);
	\node[state] (1') [right= 1.0 of t] {};
		\node[] (11') [above right = 0.3 and 0.7 of 1'] {};
		\node[] (12') [below right = 0.3 and 0.7 of 1'] {};
	\path[->]
		(1') edge[loop, rotate=150] node[pos=0.7,above] {$a$} (1')
			edge[gray] (11')
			edge[gray] (12');
	\foreach \n [count=\count from 1] in {1,2,3}{
		\node[dot,gray] (2d\n) at ($(1)+(-20+\count*10:0.4cm)$) {};}
	\foreach \n [count=\count from 1] in {1,2,3}{
		\node[dot,gray] (2d\n) at ($(1')+(-20+\count*10:0.4cm)$) {};}
	\draw (1.1,-0.9) node {(d)};
\end{scope}
\end{tikzpicture}
\vspace{-10pt}
\caption{(a), (b) open (or homotopic) foldings; (c), (d) closed (or non-homotopic) foldings}
\label{fig:foldings}
\end{figure}

Stallings bijection behaves well with respect to inclusions in the sense that, for any two subgroups $H,K\leqslant F(A)$, the inclusion $H\leqslant K$ holds if and only if there is a (unique) $A$-homomorphism $\theta_{H,K}\colon \Gamma_A(H) \to \Gamma_A(K)$ (sending vertices to vertices, $\bp_H$ to $\bp_K$, and $a$-labelled edges to $a$-labelled edges, for each $a\in A$).
When $H\leqslant K\leqslant F(A)$ and $\Gamma_A(H)$ is a subautomaton of $\Gamma_A(K)$, that is, the map $\theta_{H,K}$ is injective, then $H$ is a free factor of $K$ (take a spanning tree of $\Gamma_A(H)$ and extend its image under $\theta_{H,K}$ to a spanning tree for $\Gamma_A(K)$, resulting in extending the corresponding basis for $H$ to a basis for $K$), whereas the converse is far from true. Dually, the case where $\theta_{H,K}$ is \emph{onto}, at the level of edges, will be of central interest for our purposes. Here, we are particularly interested in the case $K=\gen{H,g}$.

Let us finish with some technical details, which will be needed later. Given a finite $A$-automaton $\Gamma$, let us distinguish between the \emph{language} $\LL(\Gamma)\leqslant F(A)$ of $\Gamma$,  and its \emph{fundamental group} at the basepoint $\pi(\Gamma, \bp)$. Of course, both are free groups and ``reading the label" gives us an epimorphism $\ell\colon \pi(\Gamma, \bp)\twoheadrightarrow \LL(\Gamma)$, $\gamma \mapsto \ell(\gamma)$; in particular, $\rk(\pi(\Gamma, \bp))=b_1(\Gamma)=1-|V\Gamma|+|E\Gamma|\geqslant \rk(\LL(\Gamma))$. Suppose now that $\Gamma \curvearrowright \Gamma'$ is an elementary folding, identifying the edges $e_1, e_2\in E\Gamma$ with $\iota e_1=\iota e_2=p$, $\tau e_1=q_1$, $\tau e_2 =q_2$, and $\ell(e_1)=\ell(e_2)=a\in A$. The natural map $\varphi\colon \Gamma \to \Gamma'$ induces an epimorphism $\varphi\colon \pi(\Gamma, \bp)\twoheadrightarrow \pi(\Gamma',\bp)$, such that $\varphi \ell=\ell$, and $\varphi$ is an isomorphism if and only if the folding is open (i.e., $q_1\neq q_2$). Given a reduced path $\gamma$ in $\Gamma$, the action of $\varphi$ \emph{projects} $\gamma$ in a natural way to a path $\gamma'$ in $\Gamma'$ (which may not be reduced, e.g. when $\gamma$ goes through $e_1$ and immediately returns through $e_2$).

In the other direction, when $\gamma'$ is a reduced path in $\Gamma'$, then we can \emph{lift} $\gamma'$ to a path $\gamma$ in $\Gamma$, but the lifting is not necessarily unique.
When the folding $\Gamma \curvearrowright \Gamma'$ is closed, then the lifting of $\gamma'$ can simply be done in the following way: whenever we pass (in either direction) through the identified ($e_1=e_2$)-edge, we choose between $e_1$ and $e_2$ to be in $\gamma$, while all the vertices and occurrences of other edges along the path stay the same.
When the folding $\Gamma \curvearrowright \Gamma'$ is open then it can happen that the path $\gamma'$ goes through the identified vertex $q_1=q_2$ in such a way that, after the unfolding and separation of these vertices, it needs to enter the vertex $q_1$ and immediately exit the vertex $q_2$ (or the other way round). The remedy to this cut in the path is to connect $q_1$ with $q_2$ by inserting the segment $e_1^{-1}e_2$ (or $e_2^{-1}e_1$ in the reverse direction).
At the level of labelling, $\ell(\gamma)$ may thus not be reduced, even when $\ell(\gamma')$ is reduced; however, both labels represent the same element in the free group $F(A)$).

Since 1983, Stallings bijection became a central tool for the modern understanding and study of the lattice of subgroups of a free group. With the development of these graph-theoretical techniques, many new results have been obtained about free groups and their subgroups. Also, most of the results known before Stallings~\cite{Sta83} have been reproved using graph-theory techniques, usually with conceptually simpler and more transparent proofs.

\section{An explicit description of the set of dependent elements} \label{sec:all_dependent_elements}

Let $|A|<\infty$ be a finite alphabet and let $H\leqfg F(A)$ and $1\neq g\in F(A)$ be given. To check whether $g$ is dependent on $H$, we just have to check whether $\rk(\gen{H,g})\leqslant \rk(H)$. One way to do this is by using classical Nielsen transformations; see~\cite{LS01}. Another way is within Stallings graph theory setting. We start by computing $\Gamma_A(H)$ and then we form the wedge of $\Gamma_A(H)$ with a petal spelling the (reduced) word $g$. Let $\Gamma_0$ be the resulting $A$-automaton. Clearly, $\LL(\Gamma_0)=\gen{H,g}$. It is not necessarily deterministic but, after performing a maximal sequence of foldings to $\Gamma_0$, we obtain the Stallings automaton $\Gamma_A(\gen{H,g})$:
 \begin{equation}\label{eq:cascade}
\Gamma_0 \curvearrowright \Gamma_1 \curvearrowright \Gamma_2 \curvearrowright \cdots \curvearrowright \Gamma_s=\Gamma_A(\gen{H,g})
 \end{equation}
(again, independent of the order in which we perform the foldings). Counting the first Betti numbers for these graphs, we have $b_1(\Gamma_0)=b_1(\Gamma_A(H))+1=\rk(H)+1$ and $b_1(\Gamma_A(\gen{H,g}))= \rk(\gen{H,g})$. Since at each elementary folding $\Gamma_i \curvearrowright \Gamma_{i+1}$ the first Betti number either remains unchanged or decreases by one, we deduce that
 $$
\begin{array}{rcl} g \text{ is dependent on } H & \Leftrightarrow & \rk(\gen{H,g})\leqslant \rk(H) \\ & \Leftrightarrow & b_1(\Gamma_A(\gen{H,g}))\leqslant b_1(\Gamma_0)-1 \\ & \Leftrightarrow & \eqref{eq:cascade} \text{ contains at least one closed folding.}
\end{array}
 $$

Let us look in more detail into the process of obtaining $\Gamma_A(\gen{H,g})$ from $\Gamma_0$. Since the result does not depend on the order of foldings, we can choose an order that suits our needs. We start at the basepoint and fold one by one the edges of the $g$-petal into those of the deterministic $A$-automaton $\Gamma_A(H)$, as long as this is possible. Let $g=g_+ g'$, with $g_+$ being the maximal initial segment of $g$ that we managed to fold (where $g_+$ or $g'$ may be trivial), and let $p$ be the vertex in $\Gamma_A(H)$ at which the path spelling $g_+$ ends. Note, that, by maximality of $|g_+|$, if $g'$ is non-trivial then its first letter does not equal any of the labels of the edges in $\Gamma_A(H)$ leaving $p$. In a similar way, we fold, edge by edge and backwards, into $\Gamma_A(H)$ the longest possible terminal segment of the path spelling $g'$. Let $g'=g_0 g_-^{-1}$ be the corresponding decomposition of $g'$, with $g_-$ spelling the label of a path from $\bp$ to a vertex, say $q$, in $\Gamma_A(H)$. Again, by maximality of $|g_-|$, if $g_0$ is non-trivial then its last letter does not equal any of the labels of the edges in $\Gamma_A(H)$ terminating at $q$; see Figure~\ref{fig:cascade}. Note, that all the elementary foldings performed until now were open. At this stage we have the decomposition $g=g_+ g_0 g_-^{-1}$, with no cancellation among the three factors. Let us distinguish between the following three cases:
\begin{enumerate}
\item \emph{$g_0\neq 1$}. In this case the folding process is complete and the obtained $A$-automaton $\Gamma_A(\gen{H,g})$ consists of $\Gamma_A(H)$ with an arc of positive length spelling $g_0$ attached to it and going from $p$ to $q$; see Figure~\ref{fig:cascade}(b). In this case, $b_1(\Gamma_A(\gen{H,g}))= b_1(\Gamma_A(H))+1=b_1(\Gamma_0)$ and so, $g$ does not depend on $H$.
\item \emph{$g_0=1$ and $p=q$}. In this case, $g=g_+g_-^{-1}$ is the label of a genuine $\bp$-cycle in $\Gamma_A(H)$. Thus, $g\in H$ and $\gen{H,g}=H$. Clearly, $g$ is dependent on $H$ in this case.
\item \emph{$g_0=1$ and $p\neq q$}. Here, as in case (i), $g\not\in H$, but at this stage it is not clear yet whether $g$ is dependent on $H$ or not. At this moment, we have exhausted the original petal $g$ and have identified the vertices $p$ and $q$, obtaining the automaton $\Gamma_A(H)/(p=q)$. This automaton is not necessarily deterministic and further foldings may need to be performed until reaching $\Gamma_A(\gen{H,g})$. In this case, $g$ is dependent on $H$ if and only if at least one closed folding occurs in this second part of the process.
\end{enumerate}

\begin{figure}
\centering
\begin{tikzpicture}[shorten >=1pt, node distance=1.2 and 1.2, on grid,auto,>=stealth',decoration={snake, segment length=2mm, amplitude=0.5mm,post length=1.5mm}]
\begin{scope}[scale=0.8]
\draw [gray, fill=gray!20] plot [smooth cycle] coordinates {(-1,0) (1,1) (3,1) (5,1) (4,0) (2,-1)};
\node[state, accepting] (0) {};
\node[] (lp) [above right = 0.7 and 3.4 of 0] {$\scriptstyle{\Gamma_A(H)}$};
\node[state] (p) [above right = 0.5 and 1.5 of 0] {};
\node[state] (q) [below right = 0.25 and 2.5 of 0] {};
\node[] (lp) [above left = 0.15 and 0.15 of p] {$\scriptstyle{q}$};
\node[state] (r) [below left = 0.7 and 1.5 of 0] {};
\node[state] (s) [above left = 0.7 and 1.5 of 0] {};
\path[->] (0) edge[snake it, bend right, min distance=7mm, in=70, out=70, blue] node[below=0.2] {$g_{+}$} (r);
\path[->] (0) edge[snake it, bend right, min distance=7mm, in=-70, out=-70, blue] node[above=0.2] {$g_{-}$} (s);
\path[->] (r) edge[snake it, bend left, min distance=15mm, in=70, out=70, blue] node[above=0.5] {$g_0$} (s);
\path[->] (0) edge[snake it] node[above left] {$g_{-}$} (p);
\path[->] (0) edge[snake it] node[below] {$g_{+}$} (q);
\node[] (lp) [below right = 0.1 and 0.2 of q] {$\scriptstyle{p}$};
\node[] [below right =1.5 and 1 of 0]{(a)};
\end{scope}
\begin{scope}[scale=0.8, shift={(7,0)}]
\draw [gray, fill=gray!20] plot [smooth cycle] coordinates {(-1,0) (1,1) (3,1) (5,1) (4,0) (2,-1)};
\node[state, accepting] (0) {};
\node[] (lp) [above right = 0.7 and 3.4 of 0] {$\scriptstyle{\Gamma_A(H)}$};
\node[state] (p) [above right = 0.5 and 1.5 of 0] {};
\node[state] (q) [below right = 0.25 and 2.5 of 0] {};
\node[] (lp) [above left = 0.15 and 0.15 of p] {$\scriptstyle{q}$};
\path[->] (0) edge[snake it] node[above left] {$g_{-}$} (p);
\path[->] (0) edge[snake it] node[below] {$g_{+}$} (q);
\node[] (lp) [below right = 0.1 and 0.2 of q] {$\scriptstyle{p}$};
\path[->] (q) edge[snake it, bend left, min distance=30mm, in=-70, out=-70, blue] node[above=0.2] {$g_0\neq 1$} (p);
\node[] [below right =1.5 and 1 of 0]{(b)};
\end{scope}
\end{tikzpicture}
\caption{The $A$-automata (a) $\Gamma_0$; and (b) $\Gamma(\gen{H,g})$ in case~(i)}
\label{fig:cascade}
\end{figure}
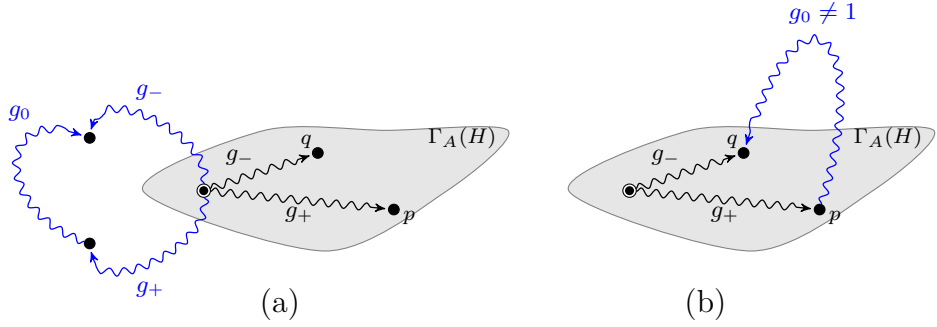

Summarizing, $g$ depends on $H$ if and only if $g\in H$, or $g$ goes into case (iii) with at least a closed folding occurring in the second part of the process. The above discussion leads us to an explicit description of \emph{all} elements dependent on $H$.

\begin{theorem}\label{thm:depgen}
Let $F(A)$ be the free group with basis $A$. Given a finite set of generators of total length $n$ of a subgroup $H\leqfg F(A)$, one can algorithmically compute in time $\OO(n^3\log^*(n))$ a finite set $\{g_0=1, g_1,\ldots ,g_m\}$ of elements of $F(A)$ that depend on $H$, and such that $\dep(H)$ is the disjoint union of the corresponding double cosets, $\dep(H)=H\sqcup Hg_1H\sqcup \cdots \sqcup Hg_mH$.
\end{theorem}

\begin{proof}
Since $H$ is finitely generated, there exists a finite subset $A_0\subseteq A$ (computable in linear time), such that $H\leqfg F(A_0)\leqff F(A)$; moreover, $\dep_{F(A)}(H)=\dep_{F(A_0)}(H)$. Thus, without loss of generality, we may assume that $A$ is finite, say $A=\{a_1, \ldots, a_r\}$.

First, we construct the finite Stallings $A$-automaton $\Gamma_A(H)$. Then, for each pair of distinct vertices $p,q\in V=V\Gamma_A(H)$, we form the graph $\Gamma_A(H)/(p=q)$ by identifying these two vertices, and then we perform elementary foldings until reaching a deterministic $A$-automaton, say $\Gamma_{p=q}$. Let
 \begin{equation}\label{eq:P}
R=\big\{ (p,q) \in V\times V \mid p\neq q,\,\, b_1(\Gamma_{p=q})\leqslant \rk(H)\big\} \subseteq V\times V\setminus \Delta,
 \end{equation}
where $\Delta=\{(p,p) \mid p\in V\}$ is the diagonal of $V\times V$. In other words, $R$ is the set of pairs $(p,q)\in V\times V\setminus \Delta$, for which the sequence of elementary foldings towards $\Gamma_{p=q}$ contains at least one which is closed. Note, that $(p,q)\in R$ implies that $(q,p)\in R$ and also $(pa_i^{\varepsilon}, qa_i^{\varepsilon})\in R$, for all $a_i\in A$ and $\varepsilon \in \{1,-1\}$, for which both $pa_i^{\varepsilon}$ and $qa_i^{\varepsilon}$ are defined.

Clearly, the discussion above tells us that
 \begin{equation}\label{eq:pairs}
\dep(H)=H\sqcup \bigcup_{(p,q)\in R} S_{(p,q)},
 \end{equation}
where $S_{(p,q)}$ is the set of elements $g \in F(A)$ that fall into case (iii), that is,
 $$
S_{(p,q)}=\{g=g_+g_-^{-1} \mid g_+ = \ell(\gamma_p), g_- = \ell(\gamma_q), \iota\gamma_p= \iota\gamma_q= \bp,\,\, \tau\gamma_p=p,\,\, \tau\gamma_q=q\}.
 $$
It only remains to observe that each of these sets $S_{(p,q)}$ is the double coset $S_{(p,q)}= Hg_{(p,q)}H$, for some (any) representative $g_{(p,q)}\in S_{(p,q)}$. Since $H$ is finitely generated, $\Gamma_A(H)$ and $R$ are finite, and so, the union in~\eqref{eq:pairs} is also finite. In addition, the coset representatives $g_{(p,q)}$ are computable. Moreover, we can algorithmically check whether $HuH=HvH$, i.e., $uH\cap Hv\neq \emptyset$, by using a solution to the coset intersection problem for free groups; see~\cite{DV21}. So, by writing $g_{(\bp,\bp)}=1$ and $S_{(\bp,\bp)}=H$, and keeping in the set $\{g_{(p,q)} \mid (p,q)\in R\}$ only one representative of each double coset, we obtain the required elements $g_0=1, g_1, \ldots, g_m$, such that:
 $$
\dep(H)=\bigcup_{(p,q)\in R\sqcup \{(\bp,\bp)\}} S_{(p,q)}=H\sqcup Hg_1H\sqcup \cdots \sqcup Hg_mH.
 $$

If $n$ denotes the total length of the input generators, $\Gamma_A(H)$ can be computed in time $\OO(n\log^*(n))$; see Touikan~\cite{Tou06}. This $A$-automaton contains at most $n$ vertices, and so, $\OO(n^2)$ pairs $(p,q)$. For each such pair $(p,q)$, we can check in time $\OO(n\log^*(n))$ whether, after identifying $p$ with $q$, the elementary foldings towards $\Gamma_{p=q}$ contain a closed folding. This makes a total time $\OO(n^3\log^*(n))$ to compute the set $R$.

Finally, the process of removing from $R$ the excessive pairs of vertices, so that no two pairs determine the same double coset, can be done in time $\OO(n^3)$. In order to do it, we first compute a spanning tree $T$ of $\Gamma_A(H)$ (in time $\OO(n)$). Then, for each $p\in V\Gamma_A(H)$, we attach to $\Gamma_A(H)$ a ``hair", labelled $\ell(T[\bp, p])$, that ends at the basepoint $\bp$. After making the necessary elementary foldings, we obtain a ``hairy" $A$-automaton, denoted $\Gamma'$, with at most $\OO(n^2)$ vertices. Next, we compute the pull-back of $\Gamma'$ with $\Gamma_A(H)$, which takes time $\OO(n^3)$ (see~\cite{DV21}). For each pair of vertices $p,q\in V\Gamma_A(H)$, let $u=\ell(T[\bp,p])$ and $v=\ell(T[\bp,q])$. Then, $HuH=HvH$ if and only if $uH\cap Hv\neq \emptyset$ if and only if $(\bp u^{-1},\bp)$ and $(\bp,q)$ belong to the same connected component of the pull-back.

Altogether, the whole process can be done in time $\OO(n^3\log^*(n))$.
\end{proof}

\begin{remark}
The algorithm for computing the representatives $g_i$ of the double cosets whose union is $\dep(H)$ can also be done using Nielsen transformations. Suppose that $u=\ell(T[\bp,p])$ and $v=\ell(T[\bp,q])$, as in the proof of Theorem~\ref{thm:depgen}. Let $S=W \cup \{g\}$, where $W$ is a basis of $H$ and $g=u v^{-1}$. Then, an occurrence of a closed elementary folding, after the identification of $p$ with $q$, is equivalent to checking whether performing a sequence of Nielsen transformations to $S$ leads to one of the generators being reduced to the trivial group element.

In fact, an identification of two edges of the same initial vertex and the same label $a$ is equivalent to the reduction of a word in which the letter $a^{\varepsilon}$ is followed by $a^{-\varepsilon}$.
	
Similar to performing Nielsen transformations but using a different representation is the following. We can represent each group element $g=u v^{-1}$ as a \emph{binomial} $u-v$ in the group ring $\Z_2 F$ of the free group $F$ over the ring of integers modulo 2. Then, the analog of a Nielsen transformation is a reduction \`{a} la Gr\"{o}bner bases, using, say, a \emph{shortlex} order (comparing monomials by their length and using a lexicographic order for a tie-breaker); see \cite[Sec. 7]{Ros93}. The advantage of this method is that it allows, in some cases, the generalization of theorems and algorithms from groups to group rings. On the other hand, Stallings graph-theory techniques, besides the new perspective and results they brought to the field of subgroups of free groups, show, in many cases, a simplified and unified approach: for example, the graph representation of the subgroup $H \leqslant F$ does not depend on the generators of $H$ but only on the chosen basis for $F$.
\end{remark}

\begin{corollary}\label{cor:basis}
Given a finite set of generators of total length $n$ for a subgroup $H\leqfg F(A)$, a generating set for $\Dep(H)$ can be algorithmically computed in time $\OO(n^3\log^*(n))$.
\end{corollary}

\begin{proof}
Clearly, if $\{h_1, \ldots ,h_s\}$ are the given generators for $H$, then $\Dep(H)=\gen{h_1, \ldots ,h_s, g_1, \ldots ,g_m}$, where $g_1, \ldots ,g_m$ are the elements computed in Theorem~\ref{thm:depgen}. The result then follows.
\end{proof}

\section{Equations for a dependent element}\label{sec:equations}

Let $H\leqfg F(A)$ and let $g\in F(A)$ be an element dependent on $H$. In this section we present two algorithms for computing a finite set of polynomials that generate $I_H(g)$ (the ideal of all polynomials $w(x)$, such that $w(g)=1$) as a normal subgroup of $H*\gen{x}$. The first one is based on Nielsen transformations and the second one on Stallings graph theory techniques. Let us start with the easy case $g\in H$.

\begin{observation}\label{obs:trivialcase}
For $g\in H\leqfg F(A)$, we have $I_H(g)=\, \ll g^{-1}x \gg \unlhd H*\gen{x}$.
\end{observation}

\begin{proof}
Clearly, $g^{-1}x \in I_H(g)$. Let now $w(x)=1$ be a reduced $H$-equation satisfying $w(g)=1$. We will show, by induction on the degree $d$ of $w(x)$, that $w(x)$ is in the normal subgroup $\ll g^{-1}x \gg \, = \, \ll gx^{-1} \gg$.	For $d=1$, we have $w(x)=h_0x^{\varepsilon_1} h_1$ and $h_0g^{\varepsilon_1}h_1=1$. So, $g^{-\varepsilon_1}=h_1h_0$ and
 $$
w(x)=h_0x^{\varepsilon_1}h_1=h_1^{-1}(h_1h_0x^{\varepsilon_1})h_1 =h_1^{-1}(g^{-\varepsilon_1}x^{\varepsilon_1})h_1\in \, \ll g^{-1}x \gg.
 $$
Assume now that the statement holds for degree $d-1$ and let
 $$
w(x)=h_0 x^{\varepsilon_1}h_1x^{\varepsilon_2}\cdots h_{d-1}x^{\varepsilon_d}h_d=1
 $$
be a reduced $H$-equation of degree $d$ and satisfying $w(g)=1$. Then, $g$ clearly satisfies the $H$-equation
 $$
w'(x) = h_0 x^{\varepsilon_1}h_1x^{\varepsilon_2}\cdots x^{\varepsilon_{d-1}}(h_{d-1}g^{\varepsilon_d}h_d)=1
 $$
of degree $d-1$, and so, by the induction hypothesis, $w'(x) \in \, \ll g^{-1}x \gg$. It follows that
 $$
w(x)=(h_0 x^{\varepsilon_1}h_1x^{\varepsilon_2}\cdots x^{\varepsilon_{d-1}}h_{d-1} g^{\varepsilon_d}h_d) (h_d^{-1}g^{-\varepsilon_d} x^{\varepsilon_d}h_d)=w'(x)(g^{-\varepsilon_d}x^{\varepsilon_d})^{h_d},
 $$
which also belongs to $\ll g^{-1}x \gg$.
\end{proof}

\begin{theorem}\label{thm:eqbasis}
Let $H\leqfg F(A)$ be of rank $n$ and let $1\neq g\in F(A)$ be dependent on $H$, such that $\rk(\gen{H,g}) = m \leqslant n$. Then, there exists an algorithm that, given a finite set of generators for $H$, computes a set of $n+1-m$ normal generators of the ideal $I_H(g) \unlhd H*\gen{x}$:
 $$
I_H(g) =\, \ll w_1(x), \ldots ,w_{n-m+1}(x)\gg.
 $$
\end{theorem}

\begin{proof}[Proof (using Nielsen transformations)]
Let $\{h_1, \ldots, h_n\}$ be a set of free generators of $H$. As in \eqref{eq:phi}, we consider the epimorphism
 $$
\begin{array}{rcl}
\varphi_{H,g} \colon H*\gen{x} & \twoheadrightarrow & \gen{H,g}\leqslant F(A) \\ h_1 & \mapsto & h_1, \\ & \vdots & \\ h_n & \mapsto & h_n, \\ x & \mapsto & g.
\end{array}
 $$
By applying a sequence of Nielsen transformations, the tuple $(h_1, \ldots, h_n, g)$ can be transformed into the tuple $(1,\ldots,1,b_1, \ldots ,b_m)$, consisting of $n-m+1$ trivial elements, followed by a basis $\{b_1, \ldots ,b_m\}$ of $\gen{H,g}$:
 $$
(h_1, \ldots, h_n, g) \looparrowright \cdots \looparrowright (1,\ldots, 1, b_1, \ldots ,b_m).
 $$
Applying the same sequence of transformations to the tuple $(h_1, \ldots, h_n, x$) results in a new basis $\{w_1(x), \ldots, w_{n+1}(x)\}$ of the free group $H*\gen{x}$. The epimorphism $\varphi_{H,g}$, expressed in this new basis, is
 $$
\begin{array}{rcl}
\varphi_{H,g} \colon H*\gen{x} & \twoheadrightarrow & \gen{H,g} \\ w_1(x) & \mapsto & 1, \\ & \vdots & \\ w_{n-m+1}(x) & \mapsto & 1, \\ w_{n-m+2}(x) & \mapsto & b_1, \\ & \vdots & \\ w_{n+1}(x), & \mapsto & b_{m}. \end{array}
 $$
Since $\{b_1, \ldots ,b_m\}$ is a basis of $\gen{H,g}$, the restriction of $\varphi_{H,g}$ to $\gen{w_{n-m+2}(x), \ldots$, $w_{n+1}(x)}$ is injective, thus the kernel of $\varphi_{H,g}$ is
 $$
\ker \varphi_{H,g} = \, \ll w_{1}(x),\ldots, w_{n-m+1}(x)\gg,
 $$
the normal subgroup of $H*\gen{x}$, normally generated by $w_{1}(x),\ldots,w_{n-m+1}(x)$.
\end{proof}

\begin{proof}[Proof (using Stallings techniques)]
An alternative way to obtain a set of normal generators for the ideal $I_H(g)$ is through Stallings techniques. The basic case $g\in H$ was already covered in Observation~\ref{obs:trivialcase}: in this case $\gen{H,g}=H$, $m=n$, $n-m+1=1$, and $I_H(g)$ is normally generated by $g^{-1}x$.

So, let us assume that $g$ is dependent on $H$ and $g\notin H$, i.e., case (iii) in the above discussion. Let $W=\{h_1, \ldots ,h_n\}$ be a free basis for $H$ and let $\FF(W\cup \{g\})$ be the corresponding flower automaton. We will now perform a series of elementary foldings towards the deterministic $A$-automaton $\Gamma_A(\gen{H,g})$, in an order that suits our needs. At the first stage we only fold the elements of $W$ and leave the petal $g$ as it is, obtaining the automaton $\Gamma_0$, as in ~\eqref{eq:cascade}, which is the wedge of $\Gamma_A(H)$ with the petal labelled $g$:
 $$
\FF(W\cup \{g\})\curvearrowright \cdots \curvearrowright \Gamma_0.
 $$
Note that, up to now, all the foldings are open because $W$ is a free basis for $H$. In a second stage, we integrate $g$ in $\Gamma_A(\gen{H,g})$ through a series of elementary foldings as in~\eqref{eq:cascade}, but doing \emph{only open foldings}, until no more possible open foldings are available: first we fold the maximal initial and terminal segments of the petal labelled $g$ until getting the automaton $\Gamma_A(H)/p=q$, and then we keep applying \emph{open} foldings until no more are available; denote $\Gamma_{p,q}$ the resulting automaton:
 $$
\Gamma_0 \curvearrowright \cdots \curvearrowright \Gamma_A(H)/(p=q) \curvearrowright \cdots \curvearrowright \Gamma_{p,q}.
 $$
The automaton $\Gamma_{p,q}$ is not yet necessarily deterministic: it could contain several closed foldings to be done. But note that, after doing all of them, no more identifications of vertices are happening, no new foldings are introduced and so, we get directly the final deterministic output
 $$
\Gamma_{p,q} \,\widehat{\,\curvearrowright}\,\, \Gamma_A(\gen{H,g}).
 $$
Let us introduce notation for this final step: $\Gamma_{p,q}$ contains, say $r\geqslant 0$, blocks of edges (possibly loops) parallel to each other, namely $\{ e_{i,0}, e_{i,1},\ldots ,e_{i,k_i}\}$ for $i=1,\ldots ,r$, with $\iota e_{i,0}=\iota e_{i,1}=\cdots =\iota e_{i,k_i}$, and $\tau e_{i,0}=\tau e_{i,1}=\cdots \tau e_{i,k_i}$, and $\ell(e_{i,0})=\ell(e_{i,1})=\cdots =\ell(e_{i,k_i})$. This final step, denoted $\widehat{\,\curvearrowright}$, aggregates
 $$
\sum_{i=1}^{r}k_i =n-m+1
 $$
closed foldings, and consists simply on identifying each block into a single edge; alternatively, we can look at it as deleting the edges $e_{i,1},\ldots ,e_{i,k_i}$ and leaving only $e_{i,0}$ from each block. The whole process of folding looks like this
 \begin{equation}\label{eq:cascade2}
\FF(W\cup \{g\})\curvearrowright \cdots \curvearrowright \Gamma_0 \curvearrowright \cdots \curvearrowright \Gamma_A(H)/(p=q) \curvearrowright \cdots \curvearrowright \Gamma_{p,q} \widehat{\,\curvearrowright}\, \Gamma_A(\gen{H,g}),
 \end{equation}

	
Next, we will show how to construct $n-m+1$ non-trivial $H$-equations $w_{i,j}(x)=1$ satisfied by $g$ for each of the edges $e_{i,j}$, $i=1,\ldots,r$, $j=1,\ldots ,k_i$. Then, we will prove that the polynomials $w_{i,j}(x)$ generate the ideal $I_H(g)$ as a normal subgroup of $H*\gen{x}$.

For each $1 \leqslant i \leqslant r$, let $\gamma_{i}$ be a reduced path in $\Gamma_{p,q}$ from $\bp$ to $\iota e_{i,0}=\cdots =\iota e_{i,k_i}$, which does not contain any of the edges $e_{i,0},\ldots,e_{i,k_i}$ (or their inverses) (if this is not possible, we define $e_{i,0},\ldots,e_{i,k_i}$ to be in the reverse direction, and then take a path $\gamma_{i}$ as above - this is always possible since every vertex lies on a reduced $\bp$-cycle). For each $i = 1,\ldots, r$ and for each $j=1,\ldots,k_i$, let now $\gamma_{i,j}$ be the reduced non-trivial $\bp$-cycle
 $$
\gamma_{i,j}=\gamma_i e_{i,j} e_{i,0}^{-1} \gamma_i^{-1}.
 $$
Let $a^{\varepsilon}$ be the label of $e_{i,j}$. Then, the label of $\gamma_{i,j}$ is
 $$
\ell(\gamma_{i,j})= \ell(\gamma_i)a^{\varepsilon}a^{-\varepsilon} \ell(\gamma_i)^{-1}=1;
 $$
this is capturing the algebraic essence of the closed folding given by the pair of edges $e_{i,0}$ and $e_{i,j}$. In the next stage, we lift $\gamma_{i,j}$ all the way up to $\FF(W\cup \{g\})$ by unfolding the open foldings in a reverse order of \eqref{eq:cascade2} to obtain a $\bp$-cycle $\widehat{\gamma}_{i,j}$.
The $\bp$-cycle $\widehat{\gamma}_{i,j}$ in $\FF(W\cup \{g\})$ is non-trivial and (topologically) reduced. So, it spells a non-trivial, reduced word, denoted $w_{i,j}(g)$, on $W\cup \{g\}$, but reads the trivial element $\ell(\widehat{\gamma}_{i,j})= \ell(\gamma_{i,j})=1$. Replacing the occurrences of $g$ by $x$ (and of $g^{-1}$ by $x^{-1}$) in $w_{i,j}(g)$, results in a reduced, non-trivial polynomial $w_{i,j}(x)$, satisfying $w_{i,j}(g)=1$ (note, that $x$ must occur in $w_{i,j}(x)$ at least once, because $W$ is a free basis of $H$).

Let us rename the $n-m+1$ polynomials $w_{i,j}(x)$ obtained this way by $w_1(x),\ldots ,$ $w_{n-m+1}(x)$. We finish the proof by showing that they generate normally the ideal $I_H(g)$. So, let $w(x)\in I_H(g)$ be an arbitrary reduced polynomial over $H$, satisfying $w(g)=1$. Then, $w(x)$ corresponds to a reduced $\bp$-cycle $\gamma$ in the flower $A$-automaton $\FF(W\cup \{g\})$. Note, that the condition $w(g)=1$ means that $\ell(\gamma)=1$. We project $\gamma$ down the open foldings in the cascade~\eqref{eq:cascade2} until getting a reduced $\bp$-cycle $\gamma'$ in $\Gamma_{p,q}$, satisfying $\ell(\gamma')=1$. We are going to prove our goal $w(x)\in \, \ll w_1(x),\ldots ,w_{n-m+1}(x)\gg$ by induction on the number of occurrences, say $s$, of the duplicated edges $e_{i,j}$, $i=1,\ldots,r$, $j=1,\ldots,k_i$, in $\gamma'$. If $s=0$ then the same path $\gamma'$ is, in fact, also a reduced $\bp$-cycle in $\Gamma_A(\gen{H,g})$ because the closed foldings do not affect it. But $\Gamma_A(\gen{H,g})$ is a deterministic $A$-automaton and therefore $\gamma'$ is the trivial path. We claim that this implies that $\gamma$ is also the trivial path in $\FF(W\cup \{g\})$. The reason is that all the foldings on the way from $\FF(W\cup \{g\})$ to $\Gamma_{p,q}$ are open, and after an unfolding of an open folding, a trivial path consisting of a single point remains a single point (which may not be the case when unfolding and reducing a closed folding). It follows that $w(x) \, \in \, \ll~w_1(x),\ldots ,w_{n-m+1}(x)~\gg$. This proves the base of the induction.

Suppose now that the result holds for equations determining paths with $s$ occurrences of the duplicated edges $e_{i,j}$ and let $w(x)\in I_g(H)$ be an equation determining a path $\gamma$ with $s+1$ such occurrences. Then, $\gamma$ can be written as $\gamma=\alpha e_{i,j}^{\varepsilon} \beta$, with $j\geqslant 1$ and $\alpha$ containing no duplicated edge $e_{i,j}$, $i=1,\ldots,r$, $j=1,\ldots,k_i$ (in either direction). Next, we decompose $\gamma$ in the following form:
 $$
\gamma =\left\{ \begin{array}{ll} \alpha e_{i,j}\beta =(\alpha \gamma_i^{-1})(\gamma_ie_{i,j}e_{i,0}^{-1}\gamma_i^{-1})(\gamma_i \alpha^{-1})(\alpha e_{i,0}\beta), & \text{ if } \varepsilon=1; \\  \alpha e_{i,j}^{-1}\beta =(\alpha e_{i,0}^{-1}\beta) (\beta^{-1}\gamma_i^{-1})(\gamma_ie_{i,0}e_{i,j}^{-1}\gamma_i^{-1}) (\gamma_i \beta), & \text{ if } \varepsilon=-1. \end{array} \right.
 $$
Lifting $\gamma$ up to $\FF(W\cup \{g\})$, we get
 $$
\widehat{\gamma}=\left\{ \begin{array}{ll} (\widehat{\gamma_i \alpha^{-1}})^{-1} \widehat{\gamma}_{i,j} (\widehat{\gamma_i \alpha^{-1}}) (\widehat{\alpha e_{i,0}\beta}), & \text{ if } \varepsilon=1; \\ (\widehat{\alpha e_{i,0}^{-1}\beta}) (\widehat{\gamma_i \beta})^{-1} \widehat{\gamma}_{i,j}^{-1} (\widehat{\gamma_i \beta}), & \text{ if } \varepsilon=-1, \end{array} \right.
 $$
which, when translated back into $H$-polynomials, means that $w(x)$ equals the product of a conjugate of one of the polynomials $w_1(x)^{\pm}, \ldots ,w_{n-m+1}(x)^{\pm}$ and the polynomial corresponding to $(\widehat{\alpha e_{i,0}^{\varepsilon}\beta})$, say $w'(x)$. Observe that $w'(g)=1$ as well, and that the corresponding path has $s$ occurrences of duplicated edges (since the first one, $e_{i,j}^{\varepsilon}$, was replaced by $e_{i,0}^{\varepsilon}$). By the induction hypothesis, we have $w'(x)\in \, \ll w_1(x), \ldots ,w_{n-m+1}(x)\gg$ and therefore, so does $w(x)$. This concludes the proof.
\end{proof}

\begin{corollary}
Given $H\leqfg F$, one can effectively compute an integer $d$, such that each $g\in F$ dependent on $H$ satisfies an equation over $H$ of degree at most $d$.
\end{corollary}

\begin{proof}
By Theorem~\ref{thm:depgen}, we can compute elements $g_1, \ldots ,g_m\in F$ dependent on $H$, so that $\dep(H)=H\sqcup Hg_1H\sqcup \cdots \sqcup Hg_mH$. Now, by Theorem~\ref{thm:eqbasis}, we can effectively compute an $H$-equation $w_i(x)=1$ of degree, say, $d_i$ for each $g_i$, $i=1,\ldots ,m$. Since, by Observation~\ref{obs:cosets}, all the elements of the form $hg_i h'\in Hg_iH$ satisfy an $H$-equation of degree less than or equal to $d_i$, it follows that each dependent element satisfies an equation of degree at most $d=\max\{d_1, \ldots ,d_m\}$.
\end{proof}

\begin{remark}
We have seen that when $H\leqfg F$ then the set of elements that depend on $H$ is of the form $\dep(H)=H\sqcup Hg_1H\sqcup \cdots \sqcup Hg_mH$. Let
 $$
J=\bigcup_{i=0}^{m} I_H(g_i) \subseteq H*\gen{x},
 $$
where $g_0 = 1$. Then, the set of $H$-polynomials which are annihilated by some element dependent on $H$ is the union, over the set of all automorphisms of $H*\gen{x}$ that fix $H$, of images of $J$. This is because the $H$-equations satisfied by $hg_ih'$ are, precisely, those of the form $w(h^{-1}xh'^{-1})=1$, for $w(x)\in I_H(g_i)$, and the elements $h^{-1}xh'^{-1}$ are all possible images of $x$ by the automorphisms of $H*\gen{x}$ that fix $H$.
\end{remark}

\section{Compressed subgroups}\label{sec:subgroup_types}

Let us look at some special classes of subgroups of a finitely generated free group $F$ and see whether they are preserved under the dependence operator. These classes form a chain with respect to inclusion, where at the top of the chain are compressed subgroups and at the bottom free factors; see Figure~\ref{fig:subgpschain}.

A subgroup $H\leqslant F$ is called \emph{inert} if $\rk (H\cap K)\leqslant \rk(K)$, for every $K\leqslant F$, and it is called \emph{compressed} when $\rk(H)\leqslant \rk(K)$, for every $H\leqslant K\leqslant F$ (i.e., the same property required only for those subgroups $K$ containing $H$); see~\cite{DV96}. Of course, every inert subgroup of $F$ is compressed, and it is an open problem whether compressed subgroups in free groups are inert. It is also clear that compressed subgroups of $F$ cannot have rank larger than that of the ambient group $F$.

A subgroup $H\leqslant F_r$ is an \emph{echelon} subgroup if there exists an ordered basis $A=\{a_1,\ldots,a_r\}$ of $F_r$, such that $0\leqslant \rk(H_i)-\rk(H_{i-1})\leqslant 1$ for $i=1,\ldots, r$, where $H_0=\{1\}$ and $H_i =H\cap \gen{a_1,\ldots,a_i}$, $i=1,\ldots, r$ (we also say then that $H$ is in \emph{echelon form w.r.t. $A$}); see~\cite{Ros13}. Note, that the inequality $0\leqslant \rk(H_i)-\rk(H_{i-1})$ always holds because $H_{i-1}\leqff H_i$. Clearly, every free factor of $F$ is an echelon subgroup.

The family of echelon subgroups is a subclass of a broader class, called \emph{1-gen-endo}, which was shown by Rosenmann \cite{Ros13} to be inert. A 1-gen-endo subgroup $H$ is formed through a finite number of transformations. We start with a basis $B$ of a free group $F$ and let $H_1$ be the subgroup generated by the elements of $B$ except for some $h \in B$ that is replaced by some element $h' \in F$. Suppose now that at step $i$ we have a subgroup $H_i \leqslant F$. Then, in step $i+1$, we form $H_{i+1}$ by replacing some element $h_j$ of a basis $B_i$ of $H_i$ by an element $h'_j \in H_i$. That is, $H_{i+1}$ is the image of the $H_i$-endomorphism which sends $h_j$ to $h'_j$ and all other basis elements are mapped to themselves. We stop after finitely-many steps and the resulting subgroup is $H$. It is easy to see that every echelon subgroup can be formed through a finite number of such transformations.

We also note, that the class of inert subgroups is still larger since e.g. every subgroup of rank 2 is inert, and there are subgroups of rank 2 that are not 1-gen-endo subgroups.

Let $\Aut(F)$ be the group of automorphisms of $F$. A subgroup $H\leqslant F$ is called \emph{1-auto-fixed} if $H=\Fix (\alpha)$, the subgroup of elements of $F$ fixed by $\alpha$, for some $\alpha\in \Aut(F)$; see~\cite{MV03}. Clearly, this class includes free factors of $F$.
In Dicks--Ventura~\cite{DV96} it was proved that 1-auto-fixed subgroups of finitely generated free groups are inert (recently, this has been generalized to 1-endo-fixed subgroups by Antol\'{\i}n--Jaikin~\cite{AJ21}, solving the inertia conjecture from~\cite{DV96}). Also, an explicit description of 1-auto-fixed subgroups of free groups was given in Martino--Ventura~\cite{MV04}, and from this it follows that 1-auto-fixed subgroups are in echelon form w.r.t. an appropriate ambient basis; see also~\cite{Ros13}.

\begin{figure}
\centering
\begin{tikzpicture}[shorten >=1pt, node distance=1.2 and 1.2, on grid,auto,>=stealth',decoration={snake, segment length=2mm, amplitude=0.5mm,post length=1.5mm}]
\begin{scope}[scale=0.55]
    \draw[] (0,1) circle [radius=1.5];
	\draw[] (0,2) circle [radius=2.5];
	\draw[] (0,3) circle [radius=3.5];
	\draw[] (0,4) circle [radius=4.5];
	\draw[] (0,5) circle [radius=5.5];
	\draw[] (0,6) circle [radius=6.5];
	\node at (0,1.5) {free};
	\node at (0,0.8) {factor};
	\node at (0,3.2) {1-auto-fixed};
	\node at (0,5.3) {echelon};
	\node at (0,7.3) {1-gen-endo};
	\node at (0,9.3) {inert};
	\node at (0,11.3) {compressed};
	\end{scope}
	\end{tikzpicture}
	\caption{A chain of classes of subgroups of a free group}
	\label{fig:subgpschain}
\end{figure}
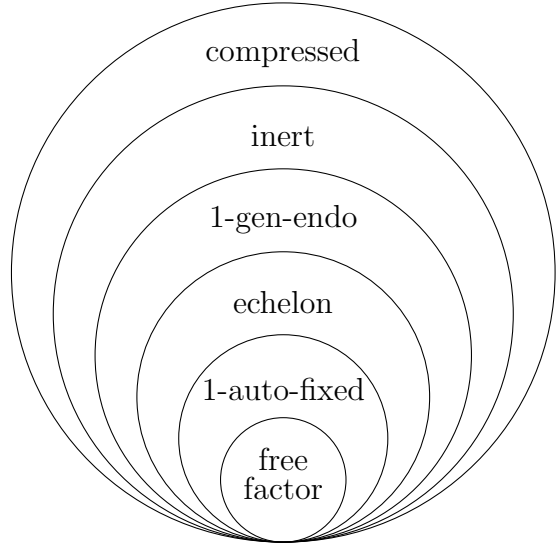

Next, we examine which of these classes are closed under the dependence operator. For some it is not difficult to give a positive answer: free factor, echelon and compressed. For the others, it is still an open problem.

\subsection{Free factors}

We start with the simplest class, that of free factors of $F$. Clearly, a subgroup which is a free factor is dependence-closed, so that it is invariant under the dependence operator. In the following proposition we are dealing with a somewhat more complex situation, where Stallings automata play an essential role in the proof.

\begin{proposition}\label{prop:free_product}
Let $F_1=F(A_1)$, $F_2=F(A_2)$, where $A_1$ and $A_2$ are disjoint alphabets, and let $F=F(A_1\sqcup A_2)=F_1*F_2$. Let $H_1\leqslant F_1$, $H_2\leqslant F_2$ and let $H=\gen{H_1, H_2}$. Then, $\Dep_{F}(H)=\Dep_{F_1}(H_1)* \Dep_{F_2}(H_2)$.
	
In particular, if $H_1$ is dependence-closed in $F_1$ and $H_2$ is dependence-closed in $F_2$, then $H$ is dependence-closed in $F$.
\end{proposition}

\begin{proof}
It is clear that $H=H_1*H_2$. The Stallings automaton $\Gamma_{A_1\sqcup A_2}(H)$ with respect to the basis $A_1\sqcup A_2$ of $F$ is the disjoint union of $\Gamma_{A_1\sqcup A_2}(H_1)=\Gamma_{A_1}(H_1)$ and $\Gamma_{A_1\sqcup A_2}(H_2)=\Gamma_{A_2}(H_2)$, with the basepoints $\bp_1$ and $\bp_2$ identified into a single one $\bp$. Let $R_1$, $R_2$, and $R$ be the sets of pairs of vertices as in~\eqref{eq:P}, respectively, from $\Gamma_{A_1}(H_1)$, $\Gamma_{A_2}(H_2)$, and $\Gamma_{A_1\sqcup A_2}(H)$. We claim that
 $$
R=R_1 \sqcup R_2.
 $$
Clearly, $R \supseteq R_1 \sqcup R_2$ since each of the pairs $(p,q)$ in $R_1$ and in $R_2$ results in a rank reduction after identifying $p$ with $q$ in the corresponding part. On the other hand, when we identify $\bp_1 \neq p\in \Gamma_{A_1}(H_1)$ and $\bp_2 \neq q\in \Gamma_{A_2}(H_2)$, we already obtain a deterministic $(A_1\sqcup A_2)$-automaton (by the fact that $A_1 \cap A_2=\emptyset$) and thus, no folding occurs and the rank increases by 1, implying that $(p,q) \notin R$. It follows that
 $$
\dep_{F}(H)=\bigcup_{(p,q) \in R_1 \sqcup R_2 \sqcup \{(\bp,\bp)\}} Hg_{(p,q)}H
 $$
and $\Dep_{F}(H)=\Dep_{F_1}(H_1)*\Dep_{F_2}(H_2)$.
\end{proof}

\subsection{The dependence subgroup of an 1-auto-fixed subgroup}

In the following example we show that the dependence subgroup of $H=\Fix(\alpha)$, for $\alpha \in \Aut(F)$, is not necessarily fixed by $\alpha$. This does not exclude the possibility of $\Dep(H)$ being fixed by another automorphism, as is indeed the case in this example.

\begin{example}
Let $F=F({a,b})$ and let $\alpha \colon F\to F$ be the automorphism defined by $a\mapsto a$, $b\mapsto ba$. Then, $H=\Fix(\alpha)=\gen{a, bab^{-1}}$, and $\Dep(H)=F$ is not fixed by $\alpha$. However, clearly $F$ is fixed by the identity automorphism.
\end{example}

\subsection{The dependence subgroup of an echelon subgroup}

For the class of echelon subgroups, we show that it is closed under the dependence operator.

\begin{proposition}
Let $F(A)$ be the free group on $A=\{a_1, \ldots ,a_r\}$, and let $H\leqfg F(A)$ be an echelon subgroup w.r.t. $A$. Then, $\Dep(H)$ is also an echelon subgroup w.r.t. $A$.
\end{proposition}

\begin{proof}
Let $F_0 =\{1\}$ and $F_i=\gen{a_1,\ldots ,a_i}\leqff F$, for $i=1,\ldots ,r$. Also, let $H_i=H\cap F_i\leqff H$ and $D_i=\Dep(H)\cap F_i\leqff \Dep(H)$, for $i=0,\ldots ,r$. Since $F_{i-1} \leqff F_i$ then $H_{i-1}\leqff H_i$, $D_{i-1}\leqff D_i$ and $\rk(H_{i-1}) \leqslant \rk(H_i)$, $\rk(D_{i-1}) \leqslant \rk(D_i)$, for $i=1,\ldots ,r$. Moreover, observe that when $\rk(D_i)=\rk(D_{i-1})$ then $D_i=D_{i-1}$ and this implies that $H_i=H_{i-1}$. Now, suppose that $H$ is in echelon form w.r.t. $A$, i.e., $0\leqslant \rk(H_i)-\rk(H_{i-1})\leqslant 1$, for each $i$. Then $\rk(H)$ equals the number of times that $\rk(H_i)=\rk(H_{i-1})+1$. Since an equality in ranks, $\rk(D_i)=\rk(D_{i-1})$, implies $H_i=H_{i-1}$, we deduce that $\rk(D_i)-\rk(D_{i-1}) \geqslant \rk(H_i)-\rk(H_{i-1})$, for each $i=1,\ldots ,r$. Therefore,
 $$
\rk(\Dep(H))=\sum_{i=1}^r (\rk(D_i)-\rk(D_{i-1}))\geqslant \sum_{i=1}^r (\rk(H_i)-\rk(H_{i-1})) =\rk(H).
 $$
But, by Proposition~\ref{prop:rankDep}, $\rk(\Dep(H))\leqslant \rk(H)$. Hence, the above inequality is an equality and thus, for each $i=1, \ldots ,r$,
 $$
0\leqslant \rk(D_i)-\rk(D_{i-1})=\rk(H_i)-\rk(H_{i-1})\leqslant 1.
 $$
Thus, $\Dep(H)$ is in echelon form w.r.t $A$.
\end{proof}

\subsection{The dependence subgroup of a compressed subgroup}

\begin{proposition}
Let $F$ be a free group and let $H\leqfg F$ be a compressed subgroup. Then, $\Dep(H)$ is compressed, with $\rk(\Dep(H)) = \rk(H)$.
\end{proposition}

\begin{proof}
By Proposition~\ref{prop:rankDep}, $\rk(\Dep(H))\leqslant \rk(H)$, but since $H\leqslant \Dep(H)$ and $H$ is compressed then $\rk(\Dep(H))\geqslant \rk(H)$. It follows that $\rk(\Dep(H))=\rk(H)$. Let now $K$ be an arbitrary subgroup with $\Dep(H) \leqslant K \leqslant F$. Then, since $H \leqslant K$ and $H$ is compressed, we have
 $$
\rk(K) \geqslant \rk(H)= \rk(\Dep(H)),
 $$
which proves that $\Dep(H)$ is compressed.
\end{proof}

\section{Dependence sequence and dependence closure}
\label{sec:dep_closure}

In this section we investigate further the dependence operator and the notion of dependence closure in the setting of a free group $F$. Recall that a subgroup $H\leqslant F$ is dependence-closed in $F$ if $\Dep_{F}(H)=H$. For example, free factors $H\leqff F$ are dependence-closed in $F$, but they are not the only ones (e.g., $H=\langle a^2b^2\rangle$ and $K=\langle [a,b]\rangle$ are both dependence-closed in $F(\{a,b\})$).

We provide here an example of a subgroup $H\leqfg F(a,b)$, such that $\Dep(H)$ is \emph{not} dependence-closed, that is, such that the process of adding to $H$ \emph{all} the elements that depend on $H$ and constructing $\Dep(H)$, itself creates new dependent elements. In fact, for every $m\geqslant 1$, we exhibit a finitely generated subgroup $H_m\leqfg F(a,b)$ with dependence length $\depl(H_m)=m$, i.e., for which its dependence sequence has length exactly $m$, $H_m<\Dep^1(H_m)<\Dep^2(H_m)<\cdots <\Dep^m(H_m)=\DepCl(H_m)$.

\begin{example}\label{ex:dep_length}
For every $m\geqslant 1$, we construct a subgroup $H_m \leqslant F=F(a,b)$ of dependence length $m\geqslant 1$ in the following way (see Figure~\ref{fig:depseq} for the case $m=4$).

Let $p_1,p_2,p_3,\ldots = 2,3,5,\ldots$ be the ascending sequence of prime numbers. We define the elements $A_n \in F$, $n \ge 1$, to be\footnote{The fact that the powers of $a$ and $b$ in the $A_n$ are prime numbers may reduce the number of possible foldings in the graphs of the dependent subgroups, but it does not seem to be a necessary condition.}
 $$
A_1=a^2b^3a^5b^7, \; A_2=a^{11}b^{13}a^{17}b^{19},\ldots, \; A_n=a^{\,p_{4n-3}}\,b^{\,p_{4n-2}} \,a^{\,p_{4n-1}} \,b^{\,p_{4n}}.
 $$
 
Now, let $H_m=\gen{h_1,\ldots,h_{m-1},A_{m}^5,A_{m+1}} \leqslant F$, where
$$
	h_1=A_2 A_3 A_1^5 A_3 A_2, \; h_2=A_3 A_4 A_2^5 A_4 A_3, \ldots, \; h_{m-1}=A_m A_{m+1} A_{m-1}^5 A_{m+1} A_m.
$$

\begin{proposition}
	$H_m$ is of dependence length $m$.
	\label{pr:depseq}
\end{proposition}
\begin{proof}
We claim that
\begin{align*}
	&\Dep(H_m)=\gen{h_1,\ldots,h_{m-2},A_{m-1}^5, A_{m}, A_{m+1}},\\
	&\Dep^2(H_m)=\gen{h_1,\ldots,h_{m-3}, A_{m-2}^5, A_{m-1},A_{m}, A_{m+1}},\\ 
	&\quad \vdots \\
	&\Dep^j(H_m) =\gen{h_1,\ldots,h_{m-1-j},A_{m-j}^5,A_{m-j+1},\ldots,A_{m+1}}, \\
	&\quad \vdots \\
	&\Dep^m(H_m) =\gen{A_1, A_2, \ldots,A_{m+1}},
\end{align*}
where all the above generators are free generators.

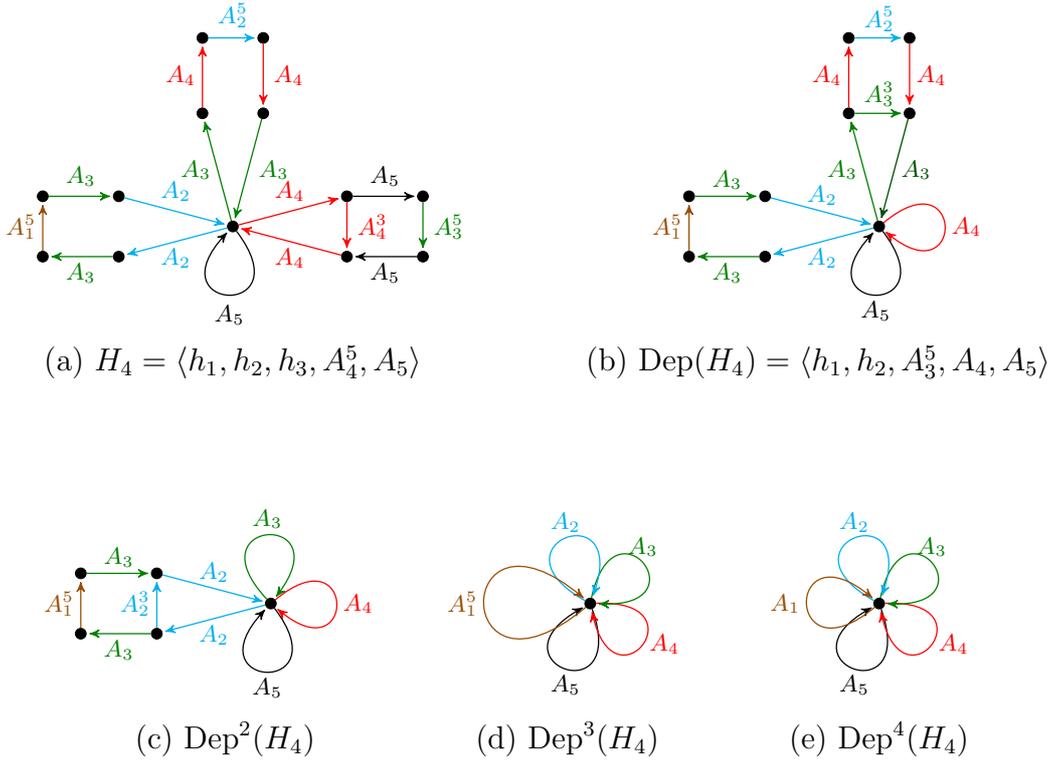
\begin{figure}
\centering
\begin{tikzpicture}[shorten >=1pt, node distance=1cm and 1.5cm, on grid,>=stealth',scale=1]
\tikzset{every loop/.style={min distance=10mm,in=0,out=60,looseness=5}}
\begin{scope}
    \node[state] (Hroot) {};
	\path[->]
	(Hroot) edge[loop,rotate=90,out=-140,in=140,looseness=40] node[pos=0.5,below=0.05] {$A_5$} (Hroot);

	\node[state] (Ha1) [below left = 0.4 and 1.5 of Hroot] {};
	\node[state] (Ha2) [left = 1. of Ha1] {};
	\node[state] (Ha3) [above = 0.8 of Ha2] {};
	\node[state] (Ha4) [above left = 0.4 and 1.5 of Hroot] {};
	\path[->]
	(Hroot) edge[cyan] node[pos=0.5,below] {$A_2$} (Ha1)
	(Ha1) edge[green!50!black] node[pos=0.5,below] {$A_3$} (Ha2)
	(Ha2) edge[orange!60!black] node[pos=0.5,left] {$A_1^5$} (Ha3)
	(Ha3) edge[green!50!black] node[pos=0.5,above=0.05] {$A_3$} (Ha4)
	(Ha4) edge[cyan] node[pos=0.5,above=0.05] {$A_2$} (Hroot);

	\node[state] (Hb1) [above left = 1.5 and 0.4 of Hroot] {};
	\node[state] (Hb2) [above = 1. of Hb1] {};
	\node[state] (Hb3) [right = 0.8 of Hb2] {};
	\node[state] (Hb4) [above right = 1.5 and 0.4 of Hroot] {};
	\path[->]
	(Hroot) edge[green!50!black] node[pos=0.5,left] {$A_3$} (Hb1)
	(Hb1) edge[red] node[pos=0.5,left] {$A_4$} (Hb2)
	(Hb2) edge[cyan] node[pos=0.5,above=0.05] {$A_2^5$} (Hb3)
	(Hb3) edge[red] node[pos=0.5,right=0.05] {$A_4$} (Hb4)
	(Hb4) edge[green!50!black] node[pos=0.5,right=0.05] {$A_3$} (Hroot);

	\node[state] (Hc1) [above right = 0.4 and 1.5 of Hroot] {};
	\node[state] (Hc2) [right = 1. of Hc1] {};
	\node[state] (Hc3) [below = 0.8 of Hc2] {};
	\node[state] (Hc4) [below right = 0.4 and 1.5 of Hroot] {};
	\path[->]
	(Hroot) edge[red] node[pos=0.5,above=0.05] {$A_4$} (Hc1)
	(Hc1) edge[] node[pos=0.5,above=0.05] {$A_5$} (Hc2)
	(Hc2) edge[green!50!black] node[pos=0.5,right=0.05] {$A_3^5$} (Hc3)
	(Hc3) edge[] node[pos=0.5,below] {$A_5$} (Hc4)
	(Hc4) edge[red] node[pos=0.5,below] {$A_4$} (Hroot)
	(Hc1) edge[red] node[pos=0.5,right=0.05] {$A_4^3$} (Hc4);
	\draw (0,-1.8) node {(a) $H_4=\gen{h_1,h_2,h_3,A_4^5,A_5}$};
\end{scope}

\begin{scope}[xshift=8.5cm]
	\node[state] (DHroot) {};
	\path[->]
	(DHroot) edge[loop,rotate=90,out=-140,in=140,looseness=40] node[pos=0.5,below] {$A_5$} (DHroot)
	(DHroot) edge[loop,rotate=180,out=-140,in=140,looseness=40,red] node[pos=0.5,right] {$A_4$} (DHroot);
	
	\node[state] (DHa1) [below left = 0.4 and 1.5 of DHroot] {};
	\node[state] (DHa2) [left = 1. of DHa1] {};
	\node[state] (DHa3) [above = 0.8 of DHa2] {};
	\node[state] (DHa4) [above left = 0.4 and 1.5 of DHroot] {};
	\path[->]
	(DHroot) edge[cyan] node[pos=0.5,below] {$A_2$} (DHa1)
	(DHa1) edge[green!50!black] node[pos=0.5,below] {$A_3$} (DHa2)
	(DHa2) edge[orange!60!black] node[pos=0.5,left] {$A_1^5$} (DHa3)
	(DHa3) edge[green!50!black] node[pos=0.5,above] {$A_3$} (DHa4)
	(DHa4) edge[cyan] node[pos=0.5,above] {$A_2$} (DHroot);
	
	\node[state] (DHb1) [above left = 1.5 and 0.4 of DHroot] {};
	\node[state] (DHb2) [above = 1. of DHb1] {};
	\node[state] (DHb3) [right = 0.8 of DHb2] {};
	\node[state] (DHb4) [above right = 1.5 and 0.4 of DHroot] {};
	\path[->]
	(DHroot) edge[green!50!black] node[pos=0.5,left] {$A_3$} (DHb1)
	(DHb1) edge[red] node[pos=0.5,left] {$A_4$} (DHb2)
	(DHb2) edge[cyan] node[pos=0.5,above] {$A_2^5$} (DHb3)
	(DHb3) edge[red] node[pos=0.5,right] {$A_4$} (DHb4)
	(DHb4) edge[green!50!black!60!black] node[pos=0.5,right] {$A_3$} (DHroot)
	(DHb1) edge[green!50!black] node[pos=0.5,above] {$A_3^3$} (DHb4);
	
	\draw (-0.8,-1.8) node {(b) $\Dep(H_4)=\gen{h_1,h_2,A_3^5,A_4,A_5}$};
\end{scope}

\begin{scope}[xshift=0.5cm, yshift=-5cm]
	\node[state] (D2Hroot) {};
	\path[->]
	(D2Hroot) edge[loop,rotate=90,out=-140,in=140,looseness=40] node[pos=0.5,below] {$A_5$} (D2Hroot)
	(D2Hroot) edge[loop,rotate=180,out=-140,in=140,looseness=40,red] node[pos=0.5,right] {$A_4$} (D2Hroot)
	(D2Hroot) edge[loop,rotate=270,out=-140,in=140,looseness=40,green!50!black] node[pos=0.5,above] {$A_3$} (D2Hroot);
	
	\node[state] (D2Ha1) [below left = 0.4 and 1.5 of D2Hroot] {};
	\node[state] (D2Ha2) [left = 1. of D2Ha1] {};
	\node[state] (D2Ha3) [above = 0.8 of D2Ha2] {};
	\node[state] (D2Ha4) [above left = 0.4 and 1.5 of D2Hroot] {};
	\path[->]
	(D2Hroot) edge[cyan] node[pos=0.5,below] {$A_2$} (D2Ha1)
	(D2Ha1) edge[green!50!black] node[pos=0.5,below] {$A_3$} (D2Ha2)
	(D2Ha2) edge[orange!60!black] node[pos=0.5,left] {$A_1^5$} (D2Ha3)
	(D2Ha3) edge[green!50!black] node[pos=0.5,above] {$A_3$} (D2Ha4)
	(D2Ha4) edge[cyan] node[pos=0.5,above] {$A_2$} (D2Hroot)
	(D2Ha1) edge[cyan] node[pos=0.5,left] {$A_2^3$} (D2Ha4);
	
	\draw (-0.6,-1.8) node {(c) $\Dep^2(H_4)$};
\end{scope}

\begin{scope}[xshift=4.7cm,yshift=-5cm]
	\node[state] (D3Hroot) {};
	\path[->]
	(D3Hroot) edge[loop,rotate=72,out=-140,in=140,looseness=40] node[pos=0.5,below] {$A_5$} (D3Hroot)
	(D3Hroot) edge[loop,rotate=144,out=-140,in=140,looseness=40,red] node[pos=0.5,right] {$A_4$} (D3Hroot)
	(D3Hroot) edge[loop,rotate=216,out=-140,in=140,looseness=40,green!50!black] node[pos=0.5,above] {$A_3$} (D3Hroot)
	(D3Hroot) edge[loop,rotate=288,out=-140,in=140,looseness=40,cyan] node[pos=0.5,above] {$A_2$} (D3Hroot)
	(D3Hroot) edge[loop,out=-140,in=140,looseness=60,orange!60!black] node[pos=0.5,left] {$A_1^5$} (D3Hroot);
	
	\draw (-0.3,-1.8) node {(d) $\Dep^3(H_4)$};
\end{scope}

\begin{scope}[xshift=8.5cm,yshift=-5cm]
	\node[state] (D4Hroot) {};
	\path[->]
	(D4Hroot) edge[loop,rotate=72,out=-140,in=140,looseness=40] node[pos=0.5,below] {$A_5$} (D4Hroot)
	(D4Hroot) edge[loop,rotate=144,out=-140,in=140,looseness=40,red] node[pos=0.5,right] {$A_4$} (D4Hroot)
	(D4Hroot) edge[loop,rotate=216,out=-140,in=140,looseness=40,green!50!black] node[pos=0.5,above] {$A_3$} (D4Hroot)
	(D4Hroot) edge[loop,rotate=288,out=-140,in=140,looseness=40,cyan] node[pos=0.5,above] {$A_2$} (D4Hroot)
	(D4Hroot) edge[loop,out=-140,in=140,looseness=40,orange!60!black] node[pos=0.5,left] {$A_1$} (D4Hroot);
	
	\draw (0,-1.8) node {(e) $\Dep^4(H_4)$};
\end{scope}

\end{tikzpicture}
	\caption{The dependence sequence of length $4$ of the subgroup $H_4$, where $h_{j}=A_{j+1} A_{j+2} A_{j}^5 A_{j+2} A_{j+1}$, for $j=1,2,3$.}
	\label{fig:depseq}
\end{figure}

The idea in constructing $H_m$ is the following. Since $H_m$ contains the element $A_m^5$, then, clearly, $A_m \in \Dep(H_m)$. Consequently, $A_{m-1}^5$ is ``revealed" since $\Dep(H_m)$ contains $A_{m+1}$, $A_m$ and $h_{m-1}=A_m A_{m+1} A_{m-1}^5 A_{m+1} A_m$. Thus, $A_{m-1}$ is a dependent element of $\Dep(H_m)$. In the next step, we obtain $A_{m-2}$ as a dependent element of $\Dep^2(H_m)$ through $A_m$, $A_{m-1}$ and $h_{m-2}=A_{m-1} A_m A_{m-2}^5 A_m A_{m-1}$, and so on. It follows that after (at most) $m$ steps we end up with the subgroup $\Dep^m(H_m) =\gen{A_1, A_2, \ldots,A_{m+1}}$, which is dependence-closed.

	
It is, however, less clear that throughout the dependence sequence there are no more dependent elements besides the ones mentioned above, which can cause additional foldings in the graphs of the subgroups. So, let us examine the graph of a subgroup $\Dep^{m-j}(H_m)$. It contains $m+1$ cycles that are of (at most) four types: the $\bp$-cycles
$h_{i}=A_{i+1} A_{i+2} A_{i}^5 A_{i+2} A_{i+1}$, for $i=1,\ldots, j-1$, $A_k$, for $k=j+1,\ldots,m+1$, and $A_{j}^5$, in addition to the cycle $A_{j+1} A_{j-1}^5 A_{j+1} A_{j}^{-3}$ that does not include the basepoint and is formed through $h_{j-1}$ and $A_{j}^5$.
For convenience and w.l.o.g., we will make use  of the graph of $H_4$ (Figure~\ref{fig:identify}(a)) when checking the possible identifications of vertices. This graph contains cycles of all the above four types and represent.

The fact that there are not many dependent elements is also due to the fact that the vertices are of low degrees. Most of the vertices are of degree $2$, with outgoing pairs of vertices $(a,a^{-1})$, $(b,b^{-1})$, $(a,b^{-1})$ or $(b,a^{-1})$. Some vertices are of degree $3$ (at most $2(m+1)$ such vertices in $\dep^i(H_m)$), with outgoing triples of vertices $(a,a^{-1},b)$ or $(b,b^{-1},a^{-1})$. The latter are to be found in the neighbourhood of the basepoint $\bp$, and two more vertices of degree $3$ are located near the cycle that does not pass through $\bp$ (near vertices 9 and 12 in Figure~\ref{fig:identify}(a)).
%

Let us now look at the possible identifications of vertices.
We call a vertex at which an $A_i$-segment starts or terminates a \emph{prime} vertex, and otherwise - a \emph{secondary} vertx.
When no prime vertex is involved in the identification (either directly or after the consequent foldings) then it takes place within some $A_i$. But clearly any such identification forms a loop that cannot be erased.

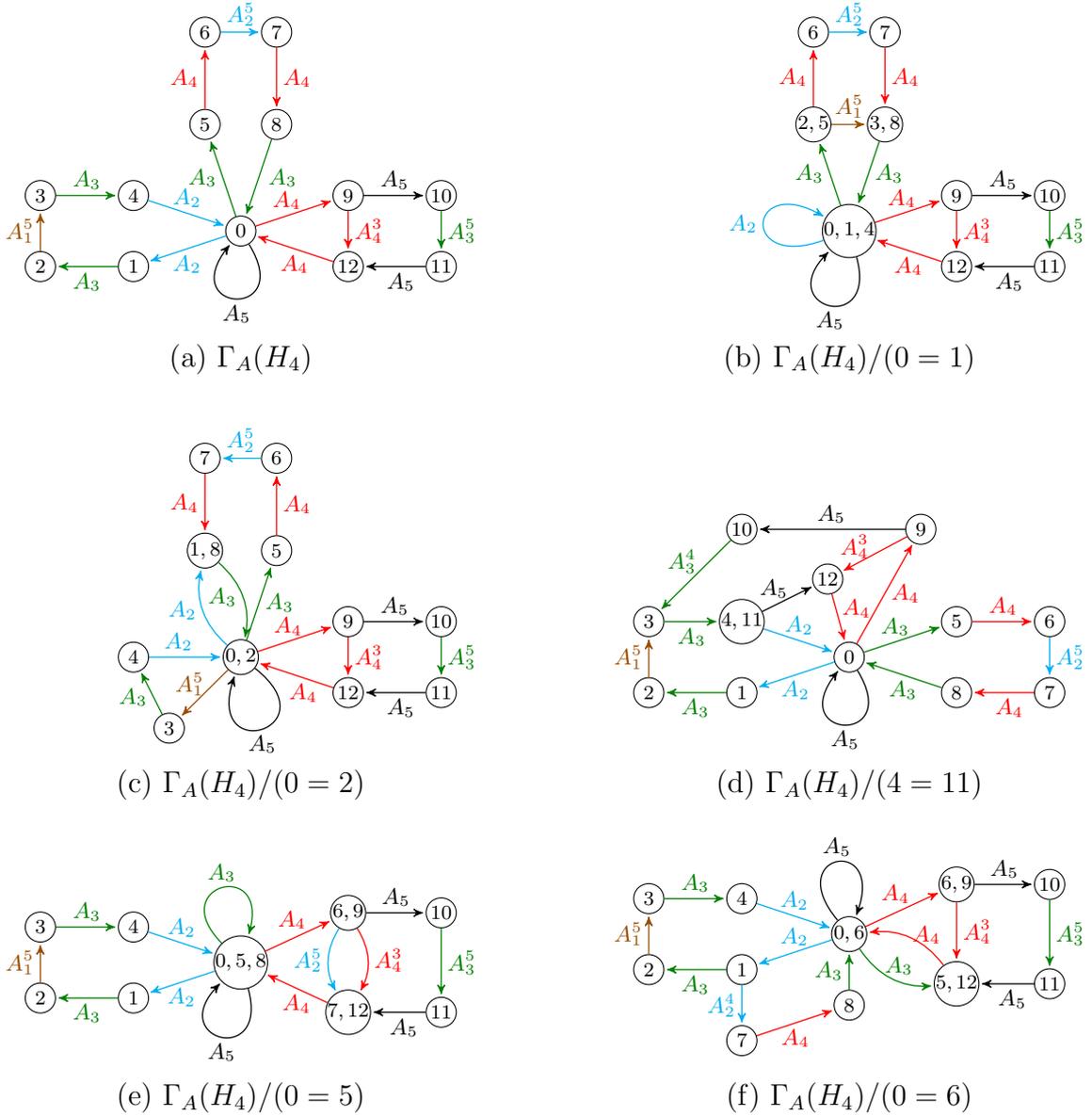
\begin{figure}
	\centering
	\begin{tikzpicture}[shorten >=1pt, node distance=1cm and 1.5cm, on grid,>=stealth',scale=1]
		\tikzset{every loop/.style={min distance=10mm, in=0, out=60, looseness=5}}
		\begin{scope}
			\vertex (v0) {$0$};
			\path[->]
			(v0) edge[loop,rotate=90,out=-140,in=150,looseness=15] node[pos=0.5,below] {$A_5$} (v0);
			\vertex (v1) [below left = 0.5 and 1.5 of v0] {$1$};
			\vertex (v2) [left = 1.3 of v1] {$2$};
			\vertex (v3) [above = 1.0 of v2] {$3$};
			\vertex (v4) [above left = 0.5 and 1.5 of v0] {$4$};
			
			\path[->]
			(v0) edge[cyan] node[pos=0.5,below] {$A_2$} (v1)
			(v1) edge[green!50!black] node[pos=0.5,below] {$A_3$} (v2)
			(v2) edge[orange!60!black] node[pos=0.5,left] {$A_1^5$} (v3)
			(v3) edge[green!50!black] node[pos=0.5,above] {$A_3$} (v4)
			(v4) edge[cyan] node[pos=0.5,above] {$A_2$} (v0);
			
			\vertex (v5) [above left = 1.5 and 0.5 of v0] {$5$};
			\vertex (v6) [above = 1.3 of v5] {$6$};
			\vertex (v7) [right = 1.0 of v6] {$7$};
			\vertex (v8) [above right = 1.5 and 0.5 of v0] {$8$};
			
			\path[->]
			(v0) edge[green!50!black] node[pos=0.5,left] {$A_3$} (v5)
			(v5) edge[red] node[pos=0.5,left] {$A_4$} (v6)
			(v6) edge[cyan] node[pos=0.5,above] {$A_2^5$} (v7)
			(v7) edge[red] node[pos=0.5,right] {$A_4$} (v8)
			(v8) edge[green!50!black] node[pos=0.5,right=0.05] {$A_3$} (v0);
			
			\vertex (v9) [above right = 0.5 and 1.5 of v0] {$9$};
			\vertex (v10) [right = 1.3 of v9] {$10$};
			\vertex (v11) [below = 1.0 of v10] {$11$};
			\vertex (v12) [below right = 0.5 and 1.5 of v0] {$12$};
			
			\path[->]
			(v0) edge[red] node[pos=0.4,above] {$A_4$} (v9)
			(v9) edge[] node[pos=0.5,above] {$A_5$} (v10)
			(v10) edge[green!50!black] node[pos=0.5,right] {$A_3^5$} (v11)
			(v11) edge[] node[pos=0.4,below] {$A_5$} (v12)
			(v12) edge[red] node[pos=0.5,below] {$A_4$} (v0)
			(v9) edge[red] node[pos=0.5,right] {$A_4^3$} (v12);
			\draw (0,-1.8) node {(a) $\Gamma_A(H_4)$};
		\end{scope}
		
		\begin{scope}[xshift=8.5cm]
			\vertex (v0) {$0,1,4$};
			\path[->]
			(v0) edge[loop,rotate=90,out=-150,in=150,looseness=10] node[pos=0.5,below] {$A_5$} (v0)
			(v0) edge[cyan,loop,rotate=0,out=-160,in=150,looseness=10] node[pos=0.5,left] {$A_2$} (v0);
			
			\vertex (v5) [above left = 1.5 and 0.5 of v0] {$2,5$};
			\vertex (v6) [above = 1.3 of v5] {$6$};
			\vertex (v7) [right = 1.0 of v6] {$7$};
			\vertex (v8) [above right = 1.5 and 0.5 of v0] {$3,8$};
			
			\path[->]
			(v0) edge[green!50!black] node[pos=0.5,left] {$A_3$} (v5)
			(v5) edge[red] node[pos=0.5,left] {$A_4$} (v6)
			(v6) edge[cyan] node[pos=0.5,above] {$A_2^5$} (v7)
			(v5) edge[orange!60!black] node[pos=0.5,above] {$A_1^5$} (v8)
			(v7) edge[red] node[pos=0.5,right] {$A_4$} (v8)
			(v8) edge[green!50!black] node[pos=0.5,right] {$A_3$} (v0);
			
			\vertex (v9) [above right = 0.5 and 1.5 of v0] {$9$};
			\vertex (v10) [right = 1.3 of v9] {$10$};
			\vertex (v11) [below = 1.0 of v10] {$11$};
			\vertex (v12) [below right = 0.5 and 1.5 of v0] {$12$};
			
			\path[->]
			(v0) edge[red] node[pos=0.4,above] {$A_4$} (v9)
			(v9) edge[] node[pos=0.5,above] {$A_5$} (v10)
			(v10) edge[green!50!black] node[pos=0.5,right] {$A_3^5$} (v11)
			(v11) edge[] node[pos=0.4,below] {$A_5$} (v12)
			(v12) edge[red] node[pos=0.5,below] {$A_4$} (v0)
			(v9) edge[red] node[pos=0.5,right] {$A_4^3$} (v12);
			\draw (0,-1.8) node {(b) $\Gamma_A(H_4)/(0=1)$};
		\end{scope}
		
		\begin{scope}[yshift=-6cm]]
			\vertex (v0) {$0,2$};
			\path[->]
			(v0) edge[loop,rotate=110,out=-140,in=150,looseness=15] node[pos=0.5,below] {$A_5$} (v0);
			\vertex (v3) [below left = 1.0 and 1.0 of v0] {$3$};
			\vertex (v4) [above left = 0.0 and 1.5 of v0] {$4$};
			
			\path[->]
			(v0) edge[orange!60!black] node[pos=0.3,left=0.05] {$A_1^5$} (v3)
			(v3) edge[green!50!black] node[pos=0.4,left] {$A_3$} (v4)
			(v4) edge[cyan] node[pos=0.4,above] {$A_2$} (v0);
			
			\vertex (v8) [above left = 1.5 and 0.5 of v0] {$1,8$};
			\vertex (v7) [above = 1.3 of v8] {$7$};
			\vertex (v6) [right = 1.0 of v7] {$6$};
			\vertex (v5) [above right = 1.5 and 0.5 of v0] {$5$};
			
			\path[->]
			(v0) edge[bend left, cyan] node[pos=0.5,left] {$A_2$} (v8)
			(v8) edge[bend left, green!50!black] node[pos=0.5,left] {$A_3$} (v0)
			(v7) edge[red] node[pos=0.5,left] {$A_4$} (v8)
			(v6) edge[cyan] node[pos=0.5,above] {$A_2^5$} (v7)
			(v5) edge[red] node[pos=0.5,right] {$A_4$} (v6)
			(v0) edge[green!50!black] node[pos=0.5,right] {$A_3$} (v5);
			
			\vertex (v9) [above right = 0.5 and 1.5 of v0] {$9$};
			\vertex (v10) [right = 1.3 of v9] {$10$};
			\vertex (v11) [below = 1.0 of v10] {$11$};
			\vertex (v12) [below right = 0.5 and 1.5 of v0] {$12$};
			
			\path[->]
			(v0) edge[red] node[pos=0.4,above] {$A_4$} (v9)
			(v9) edge[] node[pos=0.5,above] {$A_5$} (v10)
			(v10) edge[green!50!black] node[pos=0.5,right] {$A_3^5$} (v11)
			(v11) edge[] node[pos=0.4,below] {$A_5$} (v12)
			(v12) edge[red] node[pos=0.4,below] {$A_4$} (v0)
			(v9) edge[red] node[pos=0.5,right] {$A_4^3$} (v12);
			\draw (0,-1.8) node {(c) $\Gamma_A(H_4)/(0=2)$};
		\end{scope}

		\begin{scope}[xshift=8.5cm, yshift=-6cm]
			\vertex (v0) {$0$};
			\path[->]
			(v0) edge[loop,rotate=90,out=-140,in=150,looseness=15] node[pos=0.5,below] {$A_5$} (v0);
			\vertex (v1) [below left = 0.5 and 1.5 of v0] {$1$};
			\vertex (v2) [left = 1.3 of v1] {$2$};
			\vertex (v3) [above = 1.0 of v2] {$3$};
			\vertex (v4) [above left = 0.5 and 1.5 of v0] {$4,11$};
			
			\path[->]
			(v0) edge[cyan] node[pos=0.5,below] {$A_2$} (v1)
			(v1) edge[green!50!black] node[pos=0.5,below] {$A_3$} (v2)
			(v2) edge[orange!60!black] node[pos=0.5,left] {$A_1^5$} (v3)
			(v3) edge[green!50!black] node[pos=0.5,below] {$A_3$} (v4)
			(v4) edge[cyan] node[pos=0.5,above] {$A_2$} (v0);
			
			\vertex (v5) [above right = 0.5 and 1.5 of v0] {$5$};
			\vertex (v6) [right = 1.3 of v5] {$6$};
			\vertex (v7) [below = 1.0 of v6] {$7$};
			\vertex (v8) [below right = 0.5 and 1.5 of v0] {$8$};
			
			\path[->]
			(v0) edge[green!50!black] node[pos=0.4,above] {$A_3$} (v5)
			(v5) edge[red] node[pos=0.5,above] {$A_4$} (v6)
			(v6) edge[cyan] node[pos=0.5,right] {$A_2^5$} (v7)
			(v7) edge[red] node[pos=0.4,below] {$A_4$} (v8)
			(v8) edge[green!50!black] node[pos=0.5,below] {$A_3$} (v0);
			
			\vertex (v9) [above right = 1.8 and 1.0 of v0] {$9$};
			\vertex (v10) [above = 1.3 of v4] {$10$};
			\vertex (v12) [above left = 1.1 and 0.3 of v0] {$12$};
			
			\path[->]
			(v0) edge[red] node[pos=0.5,right] {$A_4$} (v9)
			(v9) edge[] node[pos=0.5,above] {$A_5$} (v10)
			(v10) edge[green!50!black] node[pos=0.3,left=0.1] {$A_3^4$} (v3)
			(v4) edge[] node[pos=0.9,left=0.2] {$A_5$} (v12)
			(v12) edge[red] node[pos=0.3,right] {$A_4$} (v0)
			(v9) edge[red] node[pos=0.8,above] {$A_4^3$} (v12);
			\draw (0,-1.8) node {(d) $\Gamma_A(H_4)/(4=11)$};
		\end{scope}
		
		\begin{scope}[yshift=-10.3cm]
			\vertex (v0) {$0,5,8$};
			\path[->]
			(v0) edge[loop,rotate=90,out=-150,in=150,looseness=10] node[pos=0.5,below] {$A_5$} (v0);
			\vertex (v1) [below left = 0.5 and 1.5 of v0] {$1$};
			\vertex (v2) [left = 1.3 of v1] {$2$};
			\vertex (v3) [above = 1.0 of v2] {$3$};
			\vertex (v4) [above left = 0.5 and 1.5 of v0] {$4$};
			
			\path[->]
			(v0) edge[cyan] node[pos=0.5,below] {$A_2$} (v1)
			(v1) edge[green!50!black] node[pos=0.5,below] {$A_3$} (v2)
			(v2) edge[orange!60!black] node[pos=0.5,left] {$A_1^5$} (v3)
			(v3) edge[green!50!black] node[pos=0.5,above] {$A_3$} (v4)
			(v4) edge[cyan] node[pos=0.5,above] {$A_2$} (v0);
			
			\path[->]
			(v0) edge[loop, rotate=270, out=-150, in=150, looseness=10, green!50!black] node[pos=0.5,above] {$A_3$} (v0);
			
			\vertex (v9) [above right = 0.7 and 1.5 of v0] {$6,9$};
			\vertex (v10) [right = 1.3 of v9] {$10$};
			\vertex (v11) [below = 1.4 of v10] {$11$};
			\vertex (v12) [below right = 0.7 and 1.5 of v0] {$7,12$};
			
			\path[->]
			(v0) edge[red] node[pos=0.4,above=0.05] {$A_4$} (v9)
			(v9) edge[] node[pos=0.5,above] {$A_5$} (v10)
			(v10) edge[green!50!black] node[pos=0.5,right] {$A_3^5$} (v11)
			(v11) edge[] node[pos=0.4,below] {$A_5$} (v12)
			(v12) edge[red] node[pos=0.5,below] {$A_4$} (v0)
			(v9) edge[red,bend left] node[pos=0.5,right] {$A_4^3$} (v12)
			(v9) edge[cyan,bend right] node[pos=0.5,left] {$A_2^5$} (v12);
			\draw (0,-1.9) node {(e) $\Gamma_A(H_4)/(0=5)$};
		\end{scope}
		
		\begin{scope}[xshift=8.5cm, yshift=-9.9cm]
			\vertex (v0) {$0,6$};
			\path[->]
			(v0) edge[loop,rotate=270,out=-150,in=150,looseness=15] node[pos=0.5,above] {$A_5$} (v0);
			\vertex (v1) [below left = 0.5 and 1.5 of v0] {$1$};
			\vertex (v2) [left = 1.3 of v1] {$2$};
			\vertex (v3) [above = 1.0 of v2] {$3$};
			\vertex (v4) [above left = 0.5 and 1.5 of v0] {$4$};
			
			\path[->]
			(v0) edge[cyan] node[pos=0.5,above] {$A_2$} (v1)
			(v1) edge[green!50!black] node[pos=0.5,below] {$A_3$} (v2)
			(v2) edge[orange!60!black] node[pos=0.5,left] {$A_1^5$} (v3)
			(v3) edge[green!50!black] node[pos=0.5,above] {$A_3$} (v4)
			(v4) edge[cyan] node[pos=0.5,above] {$A_2$} (v0);
			
			\vertex (v7) [below right = 1.0 and 0.0 of v1] {$7$};
			\vertex (v8) [below left = 1.0 and 0.0 of v0] {$8$};
			
			\path[->]
			(v1) edge[cyan] node[pos=0.5,left] {$A_2^4$} (v7)
			(v7) edge[red] node[pos=0.5,below] {$A_4$} (v8)
			(v8) edge[green!50!black] node[pos=0.5,left] {$A_3$} (v0);
			
			\vertex (v9) [above right = 0.7 and 1.5 of v0] {$6,9$};
			\vertex (v10) [right = 1.3 of v9] {$10$};
			\vertex (v11) [below = 1.4 of v10] {$11$};
			\vertex (v12) [below right = 0.7 and 1.5 of v0] {$5,12$};
			
			\path[->]
			(v0) edge[red] node[pos=0.4,above=0.05] {$A_4$} (v9)
			(v9) edge[] node[pos=0.5,above] {$A_5$} (v10)
			(v10) edge[green!50!black] node[pos=0.5,right] {$A_3^5$} (v11)
			(v11) edge[] node[pos=0.4,below] {$A_5$} (v12)
			(v12) edge[red, bend right] node[pos=0.6,right=0.05] {$A_4$} (v0)
			(v0) edge[green!50!black, bend right] node[pos=0.6,above] {$A_3$} (v12)
			(v9) edge[red] node[pos=0.5,right] {$A_4^3$} (v12);
			\draw (0,-2.3) node {(f) $\Gamma_A(H_4)/(0=6)$};
		\end{scope}
	\end{tikzpicture}
	\caption{Identifying vertices of $\Gamma_A(H_4)$}
	\label{fig:identify}
\end{figure}

So, we can assume that the identification is of a prime vertex $p$ and another vertex $q$. Suppose, first, that $q$ is a secondary vertex. 
A $p$-cycle then starts with an $A_i$-segment (or an $A_i^{-1}$-segment), for some $i$, and ends with an $A_j$-segment (or an $A_j^{-1}$-segment), for some $j$.
Let us assume, w.l.o.g. that it starts with an $A_i$-segment and ends with an $A_j$-segment.
Given that $A_i=a^{\,p_{4i-3}}\,b^{\,p_{4i-2}} \,a^{\,p_{4i-1}} \,b^{\,p_{4i}}$, the $p$-cycle begins with a sequence of $a$'s and ends with a sequence of $b$'s.
The matching $q$-cycle then must also start and end with such sequences.
Then, even if both $a$-sequences at the start of the cycles are completely matched, then the next sequence of $b$'s (of size $p_{4i-2}$ in our case) in the $p$-cycle
can be matched with a corresponding sequence of $b$'s in the $q$-cycle only when this $b$-sequence is part of an $A_i$-segment (the same $i$ as in the $p$-cycle). This is because the size of the sequence of $b$'s that follows a sequence of $a$'s is unique to $A_i$. But then the previous sequence of $a$'s in the $q$-cycle should also be the first part of some $A_i$, which contradicts our assumption that $q$ is a secondary vertex. Similarly, the last part of the $p$-cycle, consisting of a sequence of $b$'s that is preceded by an $a$-sequence, cannot be completely matched with a similar segment at the end of the $q$-cycle. It follows that the middle part of the $p$-cycle stays unfolded, and so the number of cycles cannot be reduced, and the identification of the vertices $p$ and $q$ does not produce a dependent element.

Let us now examine the case of verifying two prime vertices. Here it is clear that we can only match two paths in which both start with an $A_i$-segment (the same $i$) and end with an $A_j$-segment (the same $j$); otherwise, we cannot even match the first sequence of $a$'s or the last sequence of $b$'s.

It remains to look at an identification of two prime vertices $p$ and $q$ where both are incident to an $A_i$-segment or an $A_j^{-1}$-segment (the same $i$ or $j$).
By the preceding discussion, we can look at the $A_i$-segments as single edges, representing free generators for the subgroups they generate (as in Figure~\ref{fig:depseq}(e)). So, let us examine such identifications in $\Gamma(H_4)$ (Figure~\ref{fig:identify}(a)).
Clearly, the identification of vertex 0 (the basepoint $\bp$) with vertex 9 leads to the folding of the $A_4^5$-cycle into the $A_4$-segment and the result, after some more foldings, is seen in Figure~\ref{fig:depseq}(b). That is, $A_4$ is obtained as a dependent element of $H_4$. We claim that all other identification of vertices (other than the ones that result with the same graph as in Figure~\ref{fig:depseq}(b)) increase the number of cycles.
These are shown in Figure~\ref{fig:identify}(b)-(f), where vertices of $\Gamma_A(H_4)$ that are the initial or terminal vertices of an $A_i$-segment (the same $i$ in both vertices) are identified. Not all possible identifications are shown, but the other identifications either lead to graphs that are of the same form as the graphs in Figure~\ref{fig:identify} (possibly with different vertex and edge labels) or are those in which it is clearly seen that they increase the number of cycles from 5 to 6.

The process of constructing the dependence sequence of $H_m$ ends after $m$ steps in the subgroup freely generated by the elements $A_1,\ldots, A_{m+1}$ and then no more dependent elements outside the subgroup exist. It follows that the dependence length of $H_m$ is $m$.
\end{proof}
\end{example}

Despite the existence of these arbitrarily long dependence sequences, the dependence length, $\depl(H)$, of a finitely generated subgroup $H\leqfg F$ is finite, and its dependence closure, $\DepCl(H)$, is finitely generated. In addition, given a set of generators for $H$, one can algorithmically compute a set of generators for all the subgroups in the dependence sequence of $H$.

\begin{proposition}\label{prop:depl}
Let $H\leqfg F(A)$ and let $n$ be the total length of the given generators of $H$. Then
\begin{enumerate}
\item $\depl(H) \leqslant n$ and is computable;
\item $\rk (\DepCl(H))\leqslant \rk(H)$ and one can effectively compute a basis for $\DepCl(H)$ in time $\OO(n^4\log^*(n))$.
\end{enumerate}
\end{proposition}

\begin{proof}
(i) Consider the ascending sequence $H\leqslant \Dep^1(H)\leqslant \Dep^2(H)\leqslant \cdots$ of subgroups of $F(A)$. At each step, when passing from $\Dep^i(H)$ to $\Dep^{i+1}(H)$, we identify at least one pair of vertices of the automaton, which does not result in an increase of rank. When no such pair exits anymore, the resulting dependence-closed subgroup is $\DepCl(H)$. Since the sum of the lengths of the generators is $n$, the process consists of at most $n$ steps, i.e., $\depl(H) \leqslant n$,
	
(ii) The fact that $\rk (\DepCl(H))\leqslant \rk(H)$ follows from Proposition~\ref{prop:rankDep} and from Lemma~\ref{lem:sequence}. By Corollary~\ref{cor:basis}, given a set of generators for $\Dep^i(H)$, we can compute a basis for $\Dep^{i+1}(H)$ in time $\OO(n^3\log^*(n))$. Since the dependence length is at most $n$, a basis for $\DepCl(H)$ can be computed in time $\OO(n^4\log^*(n))$.

In fact, we can compute a set of generators for $\DepCl(H)$ directly from $\Gamma_A(H)$ instead of computing the generators for all the intermediate subgroups $\Dep^i(H)$, and so, accelerate the process. Simply, whenever two vertices $p$ and $q$ are identified in an automaton $\Gamma$ and, after performing all the needed foldings, the new automaton $\Gamma'$ is of degree at most that of $\Gamma$, then we continue the identification of next pairs of elements in $\Gamma'$ instead of $\Gamma$.
\end{proof}

When we are just interested in the question of deciding whether the subgroup $H \leqfg F(A)$ is dependence-closed then we only need to check if $\Dep(H)=H$.

\begin{proposition}
Let $H\leqfg F(A)$ and let $n$ be the total length of the given generators of $H$. Then, it is algorithmically decidable in time $\OO(n^3\log^*(n))$ whether $H$ is dependence-closed or not.
\end{proposition}
\begin{proof}

In order to decide whether $\Dep(H) \neq H$, we only need to check (in the worst case) all pairs of distinct vertices $(p,q)$ of the automaton $\Gamma_A(H)$ and see whether, after identifying $p$ with $q$, the following elementary foldings are all open. By Corollary~\ref{cor:basis}, this can be done in time $\OO(n^3\log^*(n))$.
\end{proof}

\section{Open problems}\label{sec:open}

We gather here some questions that we think are interesting for further research.

\begin{que}[answered by Dario Ascari~\cite{As22}]
Given $H\leqfg F$ and an element $g\in F$ dependent on $H$, is it possible to find a non-trivial $H$-equation of minimal degree that is satisfied by $g$? Is the ideal $I_H(g)$ (of all polynomials $w(x)$, such that $w(g)=1$) generated (as normal subgroup of $H*\langle x\rangle$) by (some of) its polynomials of minimal degree?
\end{que}

\begin{que}
Is the class of 1-auto-fixed subgroups closed under the dependence operator? And the class of 1-gen-endo subgroups? and that of inert subgroups?
\end{que}

\begin{que}
Can one extend the definitions and results in this paper to other classes of groups beside free groups? What are the ``right" definitions of dependence in these cases (see Remark~\ref{rem:other_gps})?
\end{que}

\begin{que}
Is there a finitely presented group $G$ where (some of) the above questions
are algorithmic unsolvable? In the affirmative case, construct such a group.
\end{que}

\begin{que}
Can one develop a similar theory for multi-variate equations over a subgroup $H\leqfg G$? Here, also, one has to find the ``right" definition of dependence.
\end{que}

\section*{Acknowledgments}

The first named author acknowledges the support of the Austrian Science Fund (FWF) project P29355-N35. The second named author acknowledges partial support from the Spanish Agencia Estatal de Investigaci\'{o}n, through grant MTM2017-82740-P (AEI/ FEDER, UE).

\end{document}